\newtheorem{theorem}{Theorem}[section]
\newtheorem{proposition}[theorem]{Proposition}
\newtheorem{lemma}[theorem]{Lemma}
\newtheorem{corollary}[theorem]{Corollary}
\newtheorem{remark}[theorem]{Remark}
\newtheorem{prelem}{{\bf Proposition}}
\begin{document}

\title{On the total Rainbow domination of digraphs}
\author{{Zhihong Xie}\\[1mm]
\small\emph{College of Science, East China University of Technology,}\\[-2mm]
\small\emph{Nanchang 330013, Jiangxi, P.R.China}\\[-2mm]
\small\tt{E-mail: xiezh168@163.com}\\[-1mm]
}

\date{}

\maketitle {\flushleft Abstract:} {\small For a positive integer $k$, a $k$-rainbow dominating function ($k$RDF) on a digraph $D$ is a function $f$ from
the vertex set $V(D)$ to the set of all subsets of $\{1,2,\ldots,k\}$ such that
for any vertex $v$ with $f(v)=\emptyset$, $\bigcup_{u\in N^-(v)}f(u)=\{1,2,\ldots,k\}$,
where $N^-(v)$ is the set of in-neighbors of $v$.
The weight of a $k$RDF $f$ is defined as $\sum_{v\in V(D)}|f(v)|$.
A $k$RDF $f$ on $D$ with no isolated vertex is called a total $k$-rainbow
dominating function if the subdigraph of $D$ induced by the set $\{v\in V(D):f(v)\ne\emptyset\}$ has
no isolated vertex.
The total $k$-rainbow domination number is the minimum weight of a total $k$-rainbow dominating function on $D$.
In this paper, we establish some bounds for the total $k$-rainbow domination number and we give the total $k$-rainbow domination number of some digraphs.

{\flushleft Mathematics Subject Classification (2010)}: 05C69, 05C20.
{\flushleft Key words}: Total $k$-rainbow domination number, $k$-rainbow domination number, total domination number, domination number.

\section{\large Introduction}

Throughout this paper, $D=(V(D), A(D))$ denotes a finite digraph with neither loops nor
multiple arcs (but pairs of opposite arcs are allowed).
For two vertices $u,v\in V(D)$, we use $(u,v)$ to denote the arc with direction from $u$ to $v$, that is, $u$ is adjacent
to $v$, and we also call $v$ an \emph{out-neighbor} of $u$ and $u$ an \emph{in-neighbor} of $v$.
For $v\in V(D)$, the \emph{out-neighborhood} and \emph{in-neighborhood} of $v$,
denoted by $N^+_D(v)=N^+(v)$ and $N^-_D(v)=N^-(v)$, are the sets of out-neighbors and in-neighbors of $v$, respectively.
The \emph{closed out-neighborhood} and \emph{closed in-neighborhood} of a vertex $v\in V(D)$
are the sets $N_D^+[v]=N^+[v]=N^+(v)\cup \{v\}$ and $N_D^-[v]=N^-[v]=N^-(v)\cup \{v\}$, respectively.
In general, for a set $X\subseteq V(D)$, we denote $N^+_D(X)=N^+(X)=\bigcup_{v\in X}N^+(v)$ and $N^+_D[X]=N^+[X]=\bigcup_{v\in X}N^+[v]$.
The \emph{out-degree} and \emph{in-degree} of a vertex $v\in V(D)$ are defined by
$d^+_D(v)=d^+(v)=|N^+(v)|$ and $d^-_D(v)=d^-(v)=|N^-(v)|$, respectively.
The \emph{maximum out-degree}, \emph{maximum in-degree}, \emph{minimum out-degree} and \emph{minimum in-degree} among the vertices of $D$
are denoted by $\Delta^+(D)=\Delta^+$, $\Delta^-(D)=\Delta^-$, $\delta^+(D)=\delta^+$ and $\delta^-(D)=\delta^-$, respectively.

A \emph{directed star} $S_n$ is a digraph of order $n\ge2$ with vertex set $\{v_0,v_1,\dots,v_{n-1}\}$ and arc set $\{(v_0,v_i):1\le i\le n-1\}$, where $v_0$ is the \emph{center} of $S_n$.
A \emph{leaf} of a digraph $D$ is a vertex of out-degree $0$ and in-degree $1$, while a \emph{support vertex} of $D$ is a vertex adjacent to a leaf.
A set $S\subseteq V(D)$ is a \emph{packing} of a digraph $D$ if $N^+[u]\cap N^+[v]=\emptyset$ for any two
distinct vertices $u,v\in S$.
The \emph{private neighborhood pn}$(v,S)$ of $v \in S\subseteq V(D)$ is defined by \emph{pn}$(v,S)=N^+(v)\backslash N^+(S\backslash\{v\})$. Each
vertex in \emph{pn}$(v,S)$ is called a \emph{private neighbor} of $v$.
For two vertex subsets $X$ and $Y$ of a digraph $D$, if $Y\subseteq N^+[X]$, then we say that $X$ dominates $Y$.
For a subset $S$ of vertices of $D$, we denote by $D[S]$ the subdigraph induced by $S$.

Let $D_1=(V_1, A_1)$ and $D_2=(V_2, A_2)$ be two digraphs with
disjoint vertex sets $V_1$ and $V_2$ and disjoint arc sets $A_1$ and $A_2$, respectively. The
{\em Cartesian product} $D_1\square D_2$ is the digraph with vertex set $V_1\times V_2$ and for
$(x_1, y_1),(x_2, y_2)\in V(D_1\square D_2)$, $((x_1, y_1),(x_2, y_2))\in A(D_1\square D_2)$ if and only if either $(x_1,x_2)\in A_1$ and $y_1=y_2$,
or $x_1=x_2$ and $(y_1,y_2)\in A_2$.

A subset $S$ of vertices of a digraph $D$ is a \emph{dominating set} of $D$ if $N^+[S]=V(D)$.
The \emph{domination number} of a
digraph $D$, denoted by $\gamma(D)$, is the minimum cardinality of a dominating set of $D$.
A dominating set of $D$ of cardinality $\gamma(D)$ is called a $\gamma(D)$-\emph{set}.
The domination number of digraphs was introduced by Fu \cite{Fu}.

A dominating set $S$ of a digraph $D$ is called a \emph{total
dominating set} (TD-set) of $D$ if the subdigraph of
$D$ induced by $S$ has no isolated vertex. The \emph{total
domination number} of a digraph $D$, denoted by $\gamma_t(D)$, is
the minimum cardinality of a TD-set of $D$. A TD-set of $D$ of
cardinality $\gamma_t(D)$ is called a $\gamma_t(D)$-\emph{set}.
The total domination number of digraphs was introduced by Arumugam et al. \cite{Arumugam}, results on which could be found, for examples, in \cite{Hao,Hao1}.

Let $G$ be a (undirected) graph with vertex set $V(G)$ and edge set $E(G)$.
The \emph{open neighborhood} of a vertex $v$ in $G$ is the set $N(v)=\{u\in V(G):uv\in E(G)\}$.
For a positive integer $k$, we use $\mathcal{P}(\{1,2,\ldots,k\})$ to denote the set of all subsets of the set $\{1,2,\ldots,k\}$.
A $k$-\emph{rainbow dominating function} ($k$RDF) on a graph $G$ is a function $f:V(G)\rightarrow \mathcal{P}(\{1,2,\ldots,k\})$
such that for any vertex $v$ with $f(v)=\emptyset$, $\bigcup_{u\in N(v)}f(u)=\{1,2,\ldots,k\}$.
The \emph{weight} of a $k$RDF $f$ is the value $\omega(f)=\sum_{v\in V(G)}|f(v)|$.
The $k$-\emph{rainbow domination number} of a graph $G$, denoted by $\gamma_{rk}(G)$, is the minimum weight of a $k$RDF on $G$.
The concept of rainbow domination in graphs was introduced by Bre\v{s}ar et al. \cite{Bresar} and has been studied by several authors \cite{Meierling,Shao,Sheikholeslami2,Wu,Xu}.

Amjadi et al. \cite{Amjadi} extended the concept of  rainbow domination in graphs to digraphs.
Let $k$ be a positive integer.
A $k$-\emph{rainbow dominating function} ($k$RDF) on a digraph $D$ is a function $f:V(D)\rightarrow \mathcal{P}(\{1,2,\ldots,k\})$
such that for any vertex $v$ with $f(v)=\emptyset$, $\bigcup_{u\in N^-(v)}f(u)=\{1,2,\ldots,k\}$.
The \emph{weight} of a $k$RDF $f$ is the value $\omega(f)=\sum_{v\in V(D)}|f(v)|$.
The $k$-\emph{rainbow domination number} of a digraph $D$, denoted by $\gamma_{rk}(D)$, is the minimum weight of a $k$RDF on $D$.
A $\gamma_{rk}(D)$-\emph{function} is a $k$RDF on $D$ with weight $\gamma_{rk}(D)$.
Rainbow domination in digraphs was studied in \cite{Hao2,Sheikholeslami} and elsewhere.

Recently, Ahangar et al. \cite{Ahangar1} proposed a variant of rainbow domination in graphs, namely, total $k$-rainbow domination.
The \emph{total $k$-rainbow dominating function} (T$k$RDF) on a graph $G$ with no isolated vertex is a $k$RDF $f$ on $G$ with the additional property
that the subgraph of $G$ induced by the set $\{v\in V(G):f(v)\ne\emptyset\}$
has no isolated vertex. The \emph{total $k$-rainbow domination number} $\gamma_{trk}(G)$ is the minimum
weight of a T$k$RDF on $G$.

In this paper, motivated by the work in \cite{Ahangar1}, we initiate the study of total $k$-rainbow domination in digraphs.
A $k$RDF $f$ on a digraph $D$ with no isolated vertex is called a
\emph{total $k$-rainbow dominating function} (T$k$RDF) if the subdigraph of $D$ induced by the set $\{v\in V(D):f(v)\ne\emptyset\}$
has no isolated vertex. The \emph{total $k$-rainbow domination number} $\gamma_{trk}(D)$ is the minimum
weight of a T$k$RDF on $D$. A T$k$RDF on $D$ with weight $\gamma_{trk}(D)$ is called a $\gamma_{trk}(D)$-\emph{function}.
Note that the total $k$-rainbow domination is a generalization of total domination since  $\gamma_{trk}(D)$ is the total domination number $\gamma_t(D)$ when $k=1$.

The rest of the paper is organized as follows. In the next section, we relate the total $k$-rainbow domination number of digraphs to other domination
parameters such as domination number, total domination number and rainbow domination number.
In Sect. 3, we establish sharp bounds on the total $k$-rainbow domination number of a digraph.
Finally, in Sect. 4, we determine the total $k$-rainbow domination number of the Cartesian product of directed paths.

\section{Relations to other domination parameters}

In this section, we shall relate the total $k$-rainbow domination number of digraphs
to other domination parameters such as domination number, total domination number and rainbow domination number.

\begin{theorem}\label{t2}
Let $k$ be a positive integer and let $D$ be a digraph with no isolated vertex.
Then $$\gamma_{trk}(D)\le (k+1)\gamma(D).$$
Moreover, if $\gamma_{trk}(D)=(k+1)\gamma(D)$, then every $\gamma(D)$-set is a packing in $D$.
\end{theorem}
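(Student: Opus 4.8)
The plan is to prove the inequality by an explicit construction starting from a minimum dominating set, and then to determine exactly when that construction cannot be improved. \emph{For the bound:} let $S$ be a $\gamma(D)$-set and let $I\subseteq S$ be the set of vertices that are isolated in $D[S]$. Define $f$ by $f(v)=\{1,\ldots,k\}$ for $v\in S$ and $f(v)=\emptyset$ otherwise. Since $N^+[S]=V(D)$, every vertex $v$ with $f(v)=\emptyset$ satisfies $v\in N^+(w)$ for some $w\in S$; then $w\in N^-(v)$ and $f(w)=\{1,\ldots,k\}$, so $f$ already satisfies the $k$RDF condition, and the only obstruction to totality is $I$. For each $v\in I$, since $D$ has no isolated vertex and $v$ has no neighbor inside $S$, it has a neighbor $v'\in V(D)\setminus S$; let $W$ be the set of these chosen vertices $v'$ and extend $f$ by $f(w)=\{1\}$ for $w\in W$. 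Then the set of vertices with nonempty value is $S\cup W$, the subdigraph $D[S\cup W]$ has no isolated vertex, so $f$ is a T$k$RDF, and $|W|\le|I|\le|S|$ gives $\gamma_{trk}(D)\le\omega(f)=k|S|+|W|\le(k+1)\gamma(D)$.

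\emph{For the equality statement} I would argue the contrapositive: if some $\gamma(D)$-set $S$ is not a packing, then $\gamma_{trk}(D)\le(k+1)\gamma(D)-1$, contradicting the hypothesis. Keep the notation above. If $|I|\le|S|-1$, the construction already yields $\omega(f)\le k|S|+|I|\le(k+1)|S|-1$. So assume $|I|=|S|$, i.e. $D[S]$ has no arc. As $S$ is not a packing, there are distinct $u,v\in S$ with $N^+[u]\cap N^+[v]\neq\emptyset$; pick $z$ in this intersection. Because $D[S]$ has no arc, $z$ cannot be $u$, $v$, or any other vertex of $S$ (each possibility would create an arc of $D[S]$), so $z\notin S$, and then $z\in N^+(u)\cap N^+(v)$ is a common out-neighbor of $u$ and $v$ lying outside $S$. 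Now run the construction again, but take this single vertex $z$ as the totality partner of both $u$ and $v$, choosing partners outside $S$ for the remaining $|S|-2$ vertices of $S$ as before; the resulting set $W$ has $|W|\le|S|-1$, the positive vertices still induce a subdigraph with no isolated vertex, and $\omega(f)=k|S|+|W|\le(k+1)|S|-1$. Either way $\gamma_{trk}(D)<(k+1)\gamma(D)$, the desired contradiction.

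The two verifications that the constructed $f$ is a T$k$RDF are routine. The only step that needs care is the equality case with $D[S]$ arcless: the key point is that a packing violation then forces a \emph{common out-neighbor} $z\notin S$, and that this single $z$ can serve as totality partner for both offending vertices, which is precisely what saves one unit of weight. (In fact the argument gives slightly more — no two vertices of $S$ share an in- or out-neighbor — but only the stated packing conclusion is needed.)
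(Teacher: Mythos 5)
Your proposal is correct and follows essentially the same route as the paper: the same assignment of $\{1,\ldots,k\}$ to a $\gamma(D)$-set plus singleton labels on chosen totality partners for the isolated vertices of $D[S]$, and the same key observation that a packing violation in an arcless $D[S]$ yields a common out-neighbor outside $S$ that can serve as a shared partner, saving one unit of weight. The only difference is that you phrase the equality case as a contrapositive while the paper forces equality through its inequality chain and then derives a contradiction; the underlying argument is identical.
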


\begin{proof}
Let $S$ be an arbitrary $\gamma(D)$-set and let $S'$ denote the set of vertices
in $S$ that are isolated in $D[S]$ (possibly, $S'=\emptyset$).
For each vertex $v\in S'$, we select
an out-neighbor or in-neighbor of $v$ and denote it by $v'$. Let $S''=\bigcup_{v\in S'}\{v'\}$. Clearly, $S''\subseteq V(D)\backslash S$ and $|S''|\le|S'|\le|S|$.
Now define the function $f:V(D)\rightarrow\mathcal{P}(\{1,2,\ldots,k\})$ by $f(x)=\{1,2,\dots,k\}$ for each $x\in S$, $f(x)=\{1\}$ for each $x\in S''$ and $f(x)=\emptyset$ otherwise.
Then $f$ is a T$k$RDF on $D$ and so
\begin{equation}\label{e2}
\gamma_{trk}(D)\leq k|S|+|S''|\leq k|S|+|S'|\leq (k+1)|S|=(k+1)\gamma(D),
\end{equation}
establishing the desired upper bound.

Suppose that $\gamma_{trk}(D)=(k+1)\gamma(D).$ Then we must have equality throughout the inequality chain (\ref{e2}).
In particular, $|S|=|S'|=|S''|$, implying that $S'=S$ and so $D[S]$ is empty.

We next show that $S$ is a packing in $D$. Suppose, to the contrary, that there exist two
distinct vertices $u$ and $v$ in $S$ such that $N^+[u]\cap N^+[v]\ne\emptyset$.
Let $w\in N^+[u]\cap N^+[v]$.
Since $u,v\in S$ and $D[S]$ is empty, $w\in N^+(u)\cap N^+(v)$.
We now choose $u'=v'=w$ where, as before, $u'$ and $v'$ are the
out-neighbors of $u$ and $v$, respectively.
With this choice of $u'$ and
$v'$, we note that $|S''|<|S'|$, a contradiction.
Hence for every pair of distinct vertices $u$ and $v$ in $S$,
$N^+[u]\cap N^+[v]=\emptyset$, implying that $S$ is a packing in $D$. Thus,
every $\gamma(D)$-set is a packing in $D$.
\end{proof}

\begin{remark}
It should be mentioned that if $D$ is a digraph with no isolated
vertex such that each $\gamma(D)$-set is a packing in $D$, then it is not necessarily true
that $\gamma_{trk}(D)=(k+1)\gamma(D)$.
For example, for $t\ge k+1$, let $D$ be the digraph obtained from a directed star $S_t$ with center $v$ by adding two vertices $x$ and $y$ and two arcs $(x,y)$ and $(y,v)$.
One can verify that $\gamma(D)=2$ and the set $\{x,v\}$ is the unique
$\gamma(D)$-set, which is also a packing in $D$.

However, the function $f$ defined by $f(v)=\{1,2,\dots,k\}$, $f(u)=\{1\}$ for $u\in\{x,y\}$ and $f(u)=\emptyset$ for $u\in V(D)\backslash\{v,x,y\}$, is a T$k$RDF on $D$, implying that
$\gamma_{trk}(D)\le\omega(f)=k+2<2(k+1)=(k+1)\gamma(D)$.
\end{remark}

\begin{remark}
The upper bound of Theorem \ref{t2} is sharp. Let $t\ge2$ and
let $D$ denote the digraph obtained from the disjoint union of $t$ directed stars $S_{i_1}, S_{i_2}, \dots, S_{i_t}$, where $i_j\ge k+2$ for $1\le j\le t$,
by selecting one leaf from every directed star and adding any number of arcs joining these $t$
selected leaves so that the resulting digraph is connected.
It is not difficult to verify that $\gamma(D)=t$.

Let $f$ be an arbitrary $\gamma_{trk}(D)$-function and let $v$ be an arbitrary support vertex  of $D$.
One can check that $\sum_{x\in N^+[v]}|f(x)|\ge k+1$ and so $\gamma_{trk}(D)\ge(k+1)t=(k+1)\gamma(D)$.
Moreover, by Theorem \ref{t2}, $\gamma_{trk}(D)\le(k+1)\gamma(D)$.
Therefore, we obtain $\gamma_{trk}(D)=(k+1)\gamma(D)$.
\end{remark}

\begin{theorem}\label{t1}
Let $k$ be a positive integer and let $D$ be a digraph of order $n\ge k$ with no isolated vertex. Then
$$\gamma_{t}(D)\le\gamma_{trk}(D)\le k\gamma_{t}(D).$$
Further, the following hold:
\begin{enumerate}
\item[$(a)$] The left equality holds if and only if there exists a $\gamma_{t}(D)$-set $X$ which can be partitioned into $k$ nonempty subsets
             $X_1,X_2,\dots,X_k$ such that for each $i\in\{1,2,\dots,k\}$, $V(D)\backslash X\subseteq N^+(X_i)$.
\item[$(b)$] The right equality holds if and only if there exists a $\gamma_{trk}(D)$-function $f$ such that for each $v\in V(D)$, either $f(v)=\{1,2,\dots,k\}$ or $f(v)=\emptyset$.
\end{enumerate}
\end{theorem}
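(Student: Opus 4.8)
The plan is to prove the two inequalities first and then the equality characterizations $(a)$ and $(b)$.

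For the left inequality $\gamma_t(D)\le\gamma_{trk}(D)$, I would start from an arbitrary $\gamma_{trk}(D)$-function $f$ and let $X=\{v\in V(D):f(v)\ne\emptyset\}$. Since $f$ is a T$k$RDF, $D[X]$ has no isolated vertex, so it suffices to check that $X$ is a dominating set: if $v\notin X$ then $f(v)=\emptyset$, hence $\bigcup_{u\in N^-(v)}f(u)=\{1,\dots,k\}$, so $v$ has at least one in-neighbor $u$ with $f(u)\ne\emptyset$, i.e. $u\in X$ and $v\in N^+(u)$. Thus $X$ is a TD-set and $\gamma_t(D)\le|X|\le\sum_{v\in V(D)}|f(v)|=\gamma_{trk}(D)$. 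For the right inequality $\gamma_{trk}(D)\le k\gamma_t(D)$, take a $\gamma_t(D)$-set $Y$ and define $f(v)=\{1,\dots,k\}$ for $v\in Y$ and $f(v)=\emptyset$ otherwise; since $Y$ dominates $V(D)$ every empty vertex is reached with the full set from a single in-neighbor, and since $D[Y]$ has no isolated vertex the induced subdigraph on $\{v:f(v)\ne\emptyset\}=Y$ has none either, so $f$ is a T$k$RDF of weight $k|Y|=k\gamma_t(D)$.

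For $(a)$, the reverse direction is immediate: given such a partition $X=X_1\cup\cdots\cup X_k$, define $f(v)=\{i\}$ for $v\in X_i$ (choosing one index if $X_i$'s overlap is avoided by the partition) and $f(v)=\emptyset$ otherwise; then $\omega(f)=|X|=\gamma_t(D)$, and any empty vertex $v\in V(D)\setminus X$ lies in $N^+(X_i)$ for every $i$, so receives color $i$ from some in-neighbor, giving $\bigcup_{u\in N^-(v)}f(u)=\{1,\dots,k\}$; the no-isolated-vertex condition on $D[X]$ is inherited from $Y=X$ being a TD-set. For the forward direction, if $\gamma_t(D)=\gamma_{trk}(D)$, take a $\gamma_{trk}(D)$-function $f$ and let $X=\{v:f(v)\ne\emptyset\}$; from the weight chain $|X|\le\sum|f(v)|=\gamma_{trk}(D)=\gamma_t(D)\le|X|$ we get $|X|=\sum_{v}|f(v)|$, forcing $|f(v)|=1$ for every $v\in X$, and $|X|=\gamma_t(D)$ shows $X$ is a $\gamma_t(D)$-set. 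Setting $X_i=\{v\in X:f(v)=\{i\}\}$ partitions $X$; each $X_i$ is nonempty because otherwise color $i$ is never used, contradicting the rainbow condition at any empty vertex (here $n\ge k$ guarantees there is something to dominate, or more carefully that the condition is non-vacuous); and $V(D)\setminus X\subseteq N^+(X_i)$ is exactly the statement that every empty vertex sees color $i$ among its in-neighbors.

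For $(b)$, the reverse direction: given a $\gamma_{trk}(D)$-function $f$ taking only values $\emptyset$ and $\{1,\dots,k\}$, set $Y=\{v:f(v)\ne\emptyset\}$; as in the left-inequality argument $Y$ is a TD-set, so $\gamma_t(D)\le|Y|$, while $\gamma_{trk}(D)=\omega(f)=k|Y|\ge k\gamma_t(D)$, and combined with the already-proved upper bound $\gamma_{trk}(D)\le k\gamma_t(D)$ we get equality. For the forward direction, suppose $\gamma_{trk}(D)=k\gamma_t(D)$ and let $f$ be a $\gamma_{trk}(D)$-function chosen, among all such, to maximize the number of vertices $v$ with $f(v)\in\{\emptyset,\{1,\dots,k\}\}$; I claim this $f$ already has the required form. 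Let $Y=\{v:f(v)\ne\emptyset\}$, which is a TD-set, so $|Y|\ge\gamma_t(D)$ and hence $\sum_{v\in Y}|f(v)|=k\gamma_t(D)\le k|Y|$, i.e. the average size of $f(v)$ over $v\in Y$ is at most $k$; if some vertex had $0<|f(v)|<k$, then since the total is exactly $k|Y|$ adjusted, I expect the main obstacle to be here: I need to argue one can redistribute colors — collapsing a partial label up to the full set $\{1,\dots,k\}$ on one vertex of $Y$ while emptying enough others — without breaking the rainbow or totality conditions, and without increasing weight, contradicting maximality. Concretely I would contract $Y$ to a minimal TD-set $Y'\subseteq Y$ with $|Y'|=\gamma_t(D)$ (using that any TD-set contains one of minimum size only if... actually this needs care, since sub-TD-sets need not be total), so instead I would argue directly: from $k\gamma_t(D)=\sum_{v\in Y}|f(v)|\ge |Y|\cdot 1$ and the structure forced at empty vertices, count that $|Y|=\gamma_t(D)$ is forced and then each $|f(v)|=k$; the delicate point is ruling out $|Y|>\gamma_t(D)$ with small labels, which I would handle by showing such an $f$ could be modified to lower weight. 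This redistribution/minimality argument is the crux and where I would spend the most effort.
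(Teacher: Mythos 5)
Your proofs of the two inequalities, of the reverse direction of $(a)$, and of the reverse direction of $(b)$ coincide with the paper's. The forward directions contain two problems, one serious and one minor.

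The serious gap is in the forward direction of $(b)$. You try to show that when $\gamma_{trk}(D)=k\gamma_t(D)$ some $\gamma_{trk}(D)$-function takes only the values $\emptyset$ and $\{1,\dots,k\}$ by starting from an arbitrary optimal function and ``redistributing'' colors, and you concede this redistribution is an unfinished crux. As written it is a genuine gap: you never rule out $|Y|>\gamma_t(D)$ with small labels, and your fallback of passing to a sub-TD-set fails for the reason you yourself note (subsets of total dominating sets need not be total). But the difficulty is self-inflicted: you do not need to repair an arbitrary optimal function, you only need to exhibit one of the required form. Take any $\gamma_t(D)$-set $X$ and let $f(v)=\{1,\dots,k\}$ for $v\in X$ and $f(v)=\emptyset$ otherwise --- the same function you already used for the upper bound. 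It is a T$k$RDF of weight $k\gamma_t(D)$, which under the hypothesis equals $\gamma_{trk}(D)$; hence $f$ is itself a $\gamma_{trk}(D)$-function of the required form. This one-line observation is exactly the paper's argument.

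The minor issue is in the forward direction of $(a)$. Your claim that each $X_i\ne\emptyset$ rests on the existence of a vertex $v$ with $f(v)=\emptyset$, at which the rainbow condition forces every color to appear among in-neighbors. Your parenthetical appeal to $n\ge k$ does not produce such a vertex: for example, a directed cycle with $k\ge2$ has $\gamma_{trk}=\gamma_t=n$, every vertex gets a nonempty label, and the sets $X_i=\{v:f(v)=\{i\}\}$ you define may well be empty. The case $\gamma_{trk}(D)=\gamma_t(D)=n$ has to be split off: there $X=V(D)$ is the $\gamma_t(D)$-set, $V(D)\setminus X=\emptyset$ makes the domination condition vacuous, and $n\ge k$ is used only to partition $V(D)$ into $k$ nonempty parts arbitrarily. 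Once that case is separated, your argument for the case $X_0\ne\emptyset$ is correct and matches the paper's, which forces all labels to be singletons by running equality through the chain $\gamma_t(D)\le\bigl|\bigcup_{i=1}^{k+1}X_i\bigr|\le\sum_{i=1}^{k+1}|X_i|\le\sum_{i=1}^{k}|X_i|+2|X_{k+1}|\le\omega(f)$.
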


\begin{proof} To prove the lower bound, let $f$ be a $\gamma_{trk}(D)$-function and let $V_0=\{v\in V(D):f(v)=\emptyset\}$.
We observe that $V(D)\backslash V_0$ is a TD-set of $D$ and hence $\gamma_{t}(D)\le|V(D)\backslash V_0|\le\omega(f)=\gamma_{trk}(D)$.

To prove the upper bound, let $X$ be a $\gamma_t(D)$-set.
It is easy to see that the function $f:V(D)\rightarrow\mathcal{P}(\{1,2,\ldots,k\})$ defined by $f(x)=\{1,2,\dots,k\}$ for each $x\in X$ and $f(x)=\emptyset$ for each $x\in V(D)\backslash X$, is a T$k$RDF on $D$ and hence $\gamma_{trk}(D)\le\omega(f)=k|X|=k\gamma_{t}(D)$.

(a) Suppose that $\gamma_{trk}(D)=\gamma_{t}(D)$.
Let $\gamma_{trk}(D)=\gamma_{t}(D)=n$. Then $X=V(D)$ is a unique $\gamma_{t}(D)$-set.
Since $n\ge k$, $X$ can be partitioned into $k$ nonempty subsets $X_1,X_2,\dots,X_k$.
We conclude that for each $i\in\{1,2,\dots,k\}$, $V(D)\backslash X=\emptyset\subseteq N^+(X_i)$.
So in the following we may assume that $\gamma_{trk}(D)=\gamma_{t}(D)<n$.
Let $f$ be a $\gamma_{trk}(D)$-function, $X_0=\{v\in V(D):f(v)=\emptyset\}$, $X_i=\{v\in V(D):f(v)=\{i\}\}$ for each $i\in\{1,2,\dots,k\}$ and let $X_{k+1}=V(D)\backslash \bigcup_{i=0}^kX_i$.
Since $\gamma_{trk}(D)<n$, $X_0\ne\emptyset$.
We observe that $X=\bigcup_{i=1}^{k+1}X_i$ is a TD-set of $D$ and hence
\begin{equation}\label{e1}
\gamma_{t}(D)\le\big|\bigcup_{i=1}^{k+1}X_i\big|\le\sum_{i=1}^{k+1}|X_i|\le\sum_{i=1}^{k}|X_i|+2|X_{k+1}|\le\omega(f)=\gamma_{trk}(D).
\end{equation}
Since $\gamma_{trk}(D)=\gamma_{t}(D)$, we have equality throughout the inequality chain (\ref{e1}).
 In particular, $|X_{k+1}|=0$, $X=\bigcup_{i=1}^{k}X_i$ and $\gamma_{trk}(D)=\gamma_{t}(D)=\sum_{i=1}^{k}|X_i|$.
Hence for each $i\in\{1,2,\dots,k\}$, every vertex in $X_0$ has at least one in-neighbor in $X_i$, and so
$X_i\ne\emptyset$ and $V(D)\backslash X=X_0\subseteq N^+(X_i)$.
Since $X_i\cap X_j=\emptyset$ for $1\le i<j\le k$ and $X=\bigcup_{i=1}^{k}X_i$,
we have that $\{X_1,X_2,\dots,X_k\}$ is a partition of $X$.
Moreover, since $X$ is a TD-set of $D$ and $\gamma_{t}(D)=\sum_{i=1}^{k}|X_i|=|X|$, $X$ is a $\gamma_t(D)$-set.

Conversely, suppose that there exists a $\gamma_{t}(D)$-set $X$ which can be partitioned into $k$ nonempty subsets $X_1,X_2,\dots,X_k$ such that for each $i\in\{1,2,\dots,k\}$, $V(D)\backslash X\subseteq N^+(X_i)$.
Observe that the function $g:V(D)\rightarrow\mathcal{P}(\{1,2,\ldots,k\})$ defined by
$g(x)=\{i\}$ for each $x\in X_i$ $(i\in\{1,2,\dots,k\})$ and $g(x)=\emptyset$ otherwise, is a T$k$RDF on $D$ and hence $\gamma_{trk}(D)\le\sum_{i=1}^{k}|X_i|=|X|=\gamma_{t}(D)$.
On the other hand, as proven earlier, $\gamma_{trk}(D)\ge\gamma_{t}(D)$.
Consequently, we have $\gamma_{trk}(D)=\gamma_{t}(D)$.

(b) Suppose that $\gamma_{trk}(D)=k\gamma_{t}(D)$. Let $X$ be a $\gamma_t(D)$-set.
It is easy to see that the function $f:V(D)\rightarrow\mathcal{P}(\{1,2,\ldots,k\})$ defined by $f(v)=\{1,2,\dots,k\}$ for each $v\in X$ and $f(v)=\emptyset$ for each $v\in V(D)\backslash X$, is a T$k$RDF on $D$ with weight $k\gamma_{t}(D)$, implying that $f$ is a $\gamma_{trk}(D)$-function such that for each $v\in V(D)$, either $f(v)=\{1,2,\dots,k\}$ or $f(v)=\emptyset$.

Conversely, suppose that there exists a $\gamma_{trk}(D)$-function $f$ such that for each $v\in V(D)$, either $f(v)=\{1,2,\dots,k\}$ or $f(v)=\emptyset$.
Let $X=\{v\in V(D):f(v)=\{1,2,\dots,k\}\}$.
Obviously $X$ is a TD-set of $D$ and so $k\gamma_{t}(D)\le k|X|=\gamma_{trk}(D)$.
On the other hand, as proven earlier, $\gamma_{trk}(D)\le k\gamma_{t}(D).$
As a result, we have $\gamma_{trk}(D)=k\gamma_{t}(D).$
\end{proof}

\begin{theorem}\label{t3}
Let $k$ be a positive integer and let $D$ be a connected digraph of order $n\ge \max\{k,2\}$.
Then $$\gamma_{rk}(D)\le\gamma_{trk}(D)\le 2\gamma_{rk}(D)-k+1$$
and these bounds are sharp.
\end{theorem}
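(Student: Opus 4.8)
The plan is to establish the two inequalities separately and then exhibit examples showing each is tight. The lower bound $\gamma_{rk}(D)\le\gamma_{trk}(D)$ is immediate: every T$k$RDF is in particular a $k$RDF, so the minimum weight over the (smaller) class of T$k$RDFs cannot be less than the minimum over all $k$RDFs. For the upper bound, I would start from a $\gamma_{rk}(D)$-function $g$ and let $H=\{v\in V(D):g(v)\ne\emptyset\}$ be its support. The obstruction to $g$ being a T$k$RDF is precisely the set of vertices of $H$ that are isolated in $D[H]$; call this set $H'$. The idea is to repair each isolated vertex $v\in H'$ by picking a neighbor $v'$ of $v$ (an in-neighbor or out-neighbor, which exists since $D$ is connected of order $\ge 2$) and adding a single colour to $v'$. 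If we add colour $1$ to each such $v'$, the resulting function $f$ satisfies $f(x)\ne\emptyset$ for every $x\in H\cup\{v':v\in H'\}$, and each former isolated vertex $v\in H'$ now has a non-empty neighbour $v'$, while each newly activated $v'$ is adjacent to its $v\in H'$; hence $D[\{x:f(x)\ne\emptyset\}]$ has no isolated vertex and $f$ is a T$k$RDF. This gives $\gamma_{trk}(D)\le\omega(g)+|H'|\le\gamma_{rk}(D)+|H'|$.

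The main obstacle is bounding $|H'|$ by $\gamma_{rk}(D)-k+1$. Each vertex $v\in H'$ contributes $|g(v)|\ge 1$ to the weight, so $|H'|\le\omega(g)-\sum_{v\in H\setminus H'}|g(v)|$. The crucial observation is that the weight cannot be concentrated entirely on isolated singletons: I would argue that $H\setminus H'$ is non-empty whenever $H'\ne\emptyset$, and more precisely that one can always find a way to account for $k-1$ extra units of weight. A cleaner route: since $g$ is a $k$RDF and $n\ge k$, if every vertex of $H$ were an isolated singleton then one checks that the total weight would already force $\omega(g)\ge$ something too large, or one handles the extreme case $H=H'$ directly. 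I expect the tidiest argument is: let $v\in H'$ be any isolated support vertex; then $|g(v)|$ colours sit at $v$, and because $v$ is isolated in $D[H]$ every out-neighbour $w$ of $v$ with $g(w)=\emptyset$ must receive all of $\{1,\dots,k\}$ from in-neighbours other than $v$, but we only need the weaker fact that $\sum_{x\in H}|g(x)|\ge |H'| + (k-1)$ when $H'\ne\emptyset$, since there must be at least one vertex carrying weight that "completes" a rainbow requirement. Concretely, if $H'\ne\emptyset$ pick $v\in H'$; either $|g(v)|\ge k$, in which case $|H'|\le\omega(g)-(k-1)$ already (as the other $|H'|-1$ vertices contribute $\ge|H'|-1$ and $v$ contributes $\ge k$, forcing $\omega(g)\ge |H'|+k-1$), or $|g(v)|<k$, in which case $v$ has an out-neighbour and the out-neighbours not in $H$ need their colours made up by $k$ distinct units elsewhere — yielding again $\omega(g)\ge|H'|+(k-1)$. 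Combining, $|H'|\le\gamma_{rk}(D)-k+1$, and thus $\gamma_{trk}(D)\le\gamma_{rk}(D)+(\gamma_{rk}(D)-k+1)=2\gamma_{rk}(D)-k+1$.

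Finally, for sharpness of the lower bound one wants a digraph where some $\gamma_{rk}(D)$-function already has support with no isolated vertex; a short directed path $\overrightarrow{P}_n$ or a directed cycle should do, where the optimal rainbow assignment can be chosen with adjacent non-empty vertices, giving $\gamma_{rk}(D)=\gamma_{trk}(D)$. For sharpness of the upper bound, one wants the repair cost $|H'|$ to genuinely equal $\gamma_{rk}(D)-k+1$: a directed star $S_n$ (or a disjoint-ish union glued into a connected digraph) where a $\gamma_{rk}$-function places all of $\{1,\dots,k\}$ on the centre is a natural candidate — here $\gamma_{rk}=k$ or close, every support vertex is isolated, and activating a neighbour costs exactly one extra unit, matching $2k-k+1=k+1$. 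I would state both examples, verify the parameter values by the displayed bounds (using Theorem \ref{t1} and Theorem \ref{t2} where convenient to pin down $\gamma_{trk}$ from below), and conclude that both inequalities in the chain are attained.
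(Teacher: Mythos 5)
Your lower bound is exactly the paper's argument, and your general plan for the upper bound (repair a $\gamma_{rk}(D)$-function into a T$k$RDF and bound the repair cost) is also the right shape. But the key inequality you rely on, $|H'|\le\gamma_{rk}(D)-k+1$ (equivalently $\omega(g)\ge|H'|+k-1$), is false, and the paper's own extremal example refutes it: take $V(D)=\{x_1,\dots,x_k,y_1,\dots,y_m\}$ with all arcs $(x_i,y_j)$, and the $\gamma_{rk}(D)$-function $g$ with $g(x_i)=\{i\}$. Here every support vertex is isolated in $D[H]$, so $|H'|=k$ while $\gamma_{rk}(D)-k+1=1$. The flaw in your case analysis is in the second branch: the $k$ units of colour that ``complete'' the rainbow requirement at an out-neighbour of $v$ may sit precisely on the other vertices of $H'$, so they are already counted in the $|H'|$ term and give no extra $k-1$. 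Your per-vertex accounting also overestimates the repair cost (in the example one activated $y_j$ fixes all $k$ isolated vertices at once), but that only means the bound you prove is not tight enough, not that it rescues the false intermediate step.

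The paper avoids this by charging the repair to \emph{connected components} of $D[H]$ rather than to isolated vertices. Fix one vertex $u$ with $g(u)=\emptyset$ (which exists once one reduces to the case $2\gamma_{rk}(D)-k+1<n$), and let $X_1,\dots,X_r$ be the components of $D[H]$, ordered so that exactly $X_1,\dots,X_s$ contain an in-neighbour of $u$. The rainbow condition at $u$ forces $\sum_{i\le s}\sum_{x\in X_i}|g(x)|\ge k$, and each remaining component carries weight at least $1$, so $r-s\le\gamma_{rk}(D)-k$. Assigning $\{1\}$ to $u$ and to one empty neighbour of each of $X_{s+1},\dots,X_r$ (possible by connectivity) repairs everything at cost $(r-s)+1\le\gamma_{rk}(D)-k+1$. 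Your sharpness examples are essentially sound for $k\ge2$ (directed paths/cycles for the lower bound via Corollary \ref{c2}, a large directed star for the upper bound), but the upper-bound proof needs the component-based argument to go through.
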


\begin{proof} Since every T$k$RDF on $D$ is a $k$RDF on $D$, the lower bound holds.
To show the upper bound, let $f$ be a $\gamma_{rk}(D)$-function, $V_0=\{v\in V(D):f(v)=\emptyset\}$ and let $V(D)\backslash V_0=\{v_1,v_2,\dots,v_t\}$.
Since $\gamma_{trk}(D)\le n$, we may assume that $2\gamma_{rk}(D)-k+1<n$.
Note that $n\ge k$. Therefore, we have $\gamma_{rk}(D)<(n+k-1)/2\le(2n-1)/2$, which implies that $\gamma_{rk}(D)\le n-1$ and so $V_0\ne\emptyset$.
Let $X_1,X_2,\dots,X_r$ be the connected components of $D[\{v_1,v_2,\dots,v_t\}]$ and let $u$ be a vertex of $V_0$.
Since $u\in V_0$ must have in-neighbors in $\{v_1,v_2,\dots,v_t\}$, we may assume that $N^-(u)\cap X_i\ne\emptyset$ for each $1\le i\le s$.
Obviously, $s\le r$.
Moreover, since $\bigcup_{x\in N^-(u)}f(x)=\{1,2,\dots,k\}$,
we have $\sum_{i=1}^s\sum_{x\in X_i}|f(x)|\ge k$.
Note that $\sum_{i=s+1}^r\sum_{x\in X_i}|f(x)|\ge r-s$. Consequently, we have
$$\gamma_{rk}(D)=\sum_{i=1}^t|f(v_i)|=\sum_{i=1}^s\sum_{x\in X_i}|f(x)|+\sum_{i=s+1}^r\sum_{x\in X_i}|f(x)|\ge k+(r-s),$$
implying that $r-s\le\gamma_{rk}(D)-k$.
Since $D$ is connected, we can chose a vertex $u_i\in V_0$ for each $s+1\le i\le r$ such that $u_i$ has an in-neighbor or out-neighbor in $X_i$.
Then the function $g:V(D)\rightarrow\mathcal{P}(\{1,2,\ldots,k\})$ defined by
$g(u)=g(u_i)=\{1\}$ for each $s+1\le i\le r$ and $g(x)=f(x)$ otherwise, is a T$k$RDF on $D$ and hence
$$\gamma_{trk}(D)\le \omega(g)\le\omega(f)+r-s+1\le2\gamma_{rk}(D)-k+1,$$
establishing the desired upper bound.

To prove the sharpness of the lower bound, let $t\ge2$ be an integer and
let $D$ be the digraph obtained from the disjoint union of $t$ directed stars $S_{i_1}, S_{i_2}, \dots, S_{i_t}$, where $i_j\ge k+1$ for $1\le j\le t$,
by adding any number of arcs joining the $t$ centers of these directed stars so that the resulting digraph is connected.
One can check that the function $f:V(D)\rightarrow\mathcal{P}(\{1,2,\ldots,k\})$ that assigns the set $\{1,2,\dots,k\}$ to every support vertices of $D$ and the empty set to the remaining vertices of $D$, is a $\gamma_{rk}(D)$-function and is also a $\gamma_{trk}(D)$-function, implying that $\gamma_{trk}(D)=\gamma_{rk}(D)$.

To prove the sharpness of the upper bound, let $m$ be an arbitrary positive integer and let $D$ be a digraph with vertex set $V(D)=\{x_1,x_2,\dots,x_k,$ $y_1,y_2,\dots,y_m\}$ and arc set $A(D)=\{(x_i,y_j):1\le i\le k\ \text{and}\ 1\le j\le m\}$.
One can check that the function $f:V(D)\rightarrow\mathcal{P}(\{1,2,\ldots,k\})$ that assigns the set $\{i\}$ to $v_i$ for $1\le i\le k$ and the empty set to the remaining vertices of $D$, is a $\gamma_{rk}(D)$-function with weight $k$, and
the function $g:V(D)\rightarrow\mathcal{P}(\{1,2,\ldots,k\})$ that assigns the set $\{i\}$ to $v_i$ for $1\le i\le k$, $\{1\}$ to $y_1$ and the empty set to the remaining vertices of $D$, is a $\gamma_{trk}(D)$-function with weight $k+1$.
This implies that $\gamma_{trk}(D)=2\gamma_{rk}(D)-k+1$.
\end{proof}

\section{General bounds}

Our aim in this section is to present some sharp bounds on the total $k$-rainbow domination number.
We start with a simple but sharp lower and upper bounds on total $k$-rainbow domination number of a digraph.

\begin{proposition}\label{p1}
Let $k$ be a positive integer and let $D$ be a digraph of order $n$ with no isolated vertex. Then
$$\min\{k,n\}\le\gamma_{trk}(D)\le n$$
and these bounds are sharp for any digraph of order $n\le k$ with no isolated vertex.
\end{proposition}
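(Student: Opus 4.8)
The plan is to prove the two inequalities separately and then exhibit the extremal digraphs. For the upper bound $\gamma_{trk}(D)\le n$, I would simply take the constant function $f(v)=\{1\}$ for every $v\in V(D)$; since $D$ has no isolated vertex, the induced subdigraph on $\{v:f(v)\ne\emptyset\}=V(D)$ is just $D$ itself, which has no isolated vertex, so $f$ is a T$k$RDF of weight $n$. This handles the upper bound with essentially no work, and it already shows that whenever $n\le k$ the value $\gamma_{trk}(D)=n$, since the lower bound in that case reads $\min\{k,n\}=n$.

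For the lower bound, I would first note that if $n\le k$ then $\min\{k,n\}=n$ and the bound $\gamma_{trk}(D)\ge n$ is immediate because every vertex must receive a nonempty label: indeed, if some vertex $v$ had $f(v)=\emptyset$, then $\bigcup_{u\in N^-(v)}f(u)=\{1,\dots,k\}$ would force at least $k\ge n$ units of weight distributed over the in-neighbors of $v$, and since there are at most $n-1<k$ other vertices this is already a contradiction unless... — more carefully, I would argue that the weight is at least $|\{v:f(v)\ne\emptyset\}|$, and if this set is all of $V(D)$ we are done, while if some $v$ has $f(v)=\emptyset$ then the weight is at least $\sum_{u\in N^-(v)}|f(u)|\ge k\ge n$. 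Either way $\gamma_{trk}(D)\ge n=\min\{k,n\}$ when $n\le k$. For the remaining case $n>k$, I must show $\gamma_{trk}(D)\ge k$. Here I would take a $\gamma_{trk}(D)$-function $f$; since $D$ has no isolated vertex, $f$ is not identically $\emptyset$ (indeed the nonempty-support set must be nonempty and induce no isolated vertex, so it has size at least $2$), hence some vertex $v$ has $f(v)\ne\emptyset$. If the total support has size at least $k$ we get weight at least $k$ immediately; otherwise pick any vertex $w$ with $f(w)=\emptyset$ — if none exists the support is all of $V(D)$, but $n>k$, contradiction with support size $<k$ — and then $\sum_{u\in N^-(w)}|f(u)|\ge|\bigcup_{u\in N^-(w)}f(u)|=k$, giving $\omega(f)\ge k$. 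So $\gamma_{trk}(D)\ge k=\min\{k,n\}$.

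Finally, for sharpness when $n\le k$: as observed above, the upper bound gives $\gamma_{trk}(D)\le n$ and the lower bound gives $\gamma_{trk}(D)\ge \min\{k,n\}=n$, so $\gamma_{trk}(D)=n=\min\{k,n\}$ for \emph{every} digraph of order $n\le k$ with no isolated vertex; this simultaneously certifies sharpness of both bounds in this regime. (If one also wants sharpness of the lower bound $k$ when $n>k$, the digraph in the last sharpness construction of Theorem~\ref{t3} or a directed cycle of suitable length would work, but the statement as written only claims sharpness for $n\le k$.)

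The only mildly delicate point is the case analysis in the lower bound: one must be careful to separate ``the support of $f$ already has size $\ge k$'' from ``there is an uncovered vertex forcing weight $\ge k$ on its in-neighbors,'' and to note that a nonempty support forced by the total condition rules out the degenerate empty function. None of this is hard; the main obstacle is purely organizational, making sure the two sub-cases $n\le k$ and $n>k$ are each dispatched cleanly without circular reasoning about the support size.
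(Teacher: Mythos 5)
Your proof is correct and uses essentially the same idea as the paper: the dichotomy between ``every vertex gets a nonempty set'' (weight $\ge n$) and ``some vertex gets $\emptyset$'' (the union condition over its in-neighbors forces weight $\ge k$), plus the trivial all-$\{1\}$ function for the upper bound. The paper just streamlines your organization by observing that each branch of the dichotomy already yields $\ge\min\{k,n\}$, so the separate case split on $n\le k$ versus $n>k$ is unnecessary.
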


\begin{proof} The upper bound is trivial. To show the lower bound, let $f$ be a $\gamma_{trk}(D)$-function. If $f(v)\ne\emptyset$ for each $v\in V(D)$, then clearly $\gamma_{trk}(D)=\omega(f)=n$. If there exists some vertex, say $v$, of $D$ such that $f(v)=\emptyset$, then $\bigcup_{x\in N^-(v)}f(x)=\{1,2,\dots,k\}$ and hence $\gamma_{trk}(D)=\omega(f)\ge\sum_{x\in N^-(v)}|f(x)|\ge|\bigcup_{x\in N^-(v)}f(x)|=k$, establishing the desired lower bound.
\end{proof}

We will provide a sufficient condition to have $\gamma_{trk}(D)=n$.
For this purpose, we first give a known result due to Hao and Qian \cite{Hao2}.

\begin{prelem}(\cite{Hao2})\label{thm}
Let $k$ be a positive integer and let $D$ be a digraph of order $n$ with $\Delta^-\geq\Delta^+$. If $k>(\Delta^-)^2$, then
$\gamma_{rk}(D)=n.$
\end{prelem}

As an immediate consequence of Theorems \ref{t3}, Theorem \ref{thm} and Proposition \ref{p1}, we have the
following corollary.

\begin{corollary}\label{c1}
Let $k$ be a positive integer and let $D$ be a digraph of order $n$ with $\Delta^-\geq\Delta^+\ge1$. If $k>(\Delta^-)^2$, then
$\gamma_{trk}(D)=n.$
\end{corollary}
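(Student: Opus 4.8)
The plan is to chain together the three ingredients cited in the statement --- the upper bound from Theorem~\ref{t3}, the known value of $\gamma_{rk}(D)$ from Proposition~\ref{thm}, and the trivial upper bound $\gamma_{trk}(D)\le n$ from Proposition~\ref{p1} --- in a way that squeezes $\gamma_{trk}(D)$ to exactly $n$. First I would observe that the hypotheses $\Delta^-\ge\Delta^+\ge1$ and $k>(\Delta^-)^2$ are precisely those needed to invoke Proposition~\ref{thm}, which yields $\gamma_{rk}(D)=n$. The condition $\Delta^+\ge1$ also guarantees that $D$ has no isolated vertex (in fact it is connected enough in the relevant sense, or at least every vertex has an out-neighbor), so the hypotheses of Proposition~\ref{p1} are met and we get $\gamma_{trk}(D)\le n$; moreover $n\ge k$ would need to be checked in order to apply Theorem~\ref{t3}, but since $k>(\Delta^-)^2\ge(\Delta^-)$ and $n\ge\Delta^-+1$ when $\Delta^-\ge1$, a short argument gives $n\ge k$ unless the digraph is degenerate, which I would handle separately or note is covered by $\Delta^+\ge1$ forcing enough vertices.

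Next I would apply the lower bound of Theorem~\ref{t3}, namely $\gamma_{trk}(D)\ge\gamma_{rk}(D)$, to conclude $\gamma_{trk}(D)\ge\gamma_{rk}(D)=n$. Combining this with $\gamma_{trk}(D)\le n$ gives $\gamma_{trk}(D)=n$, which is exactly the claim. So the entire proof is essentially two inequalities pointing at each other once the value $\gamma_{rk}(D)=n$ is in hand.

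The only genuine subtlety --- and the step I expect to require the most care --- is checking that all the side conditions on the orders and on isolated vertices are actually satisfied, so that Theorem~\ref{t3} and Proposition~\ref{p1} may legitimately be invoked. In particular, Theorem~\ref{t3} is stated for \emph{connected} digraphs of order $n\ge\max\{k,2\}$; if the intended reading of the corollary does not assume connectivity, I would instead bypass Theorem~\ref{t3} and argue directly: from $\gamma_{rk}(D)=n$ one already knows $f(v)\ne\emptyset$ for every vertex $v$ in any $\gamma_{rk}(D)$-function (otherwise a vertex with empty label would need $k\le(\Delta^-)^2$ distinct colors in its in-neighborhood, contradicting $k>(\Delta^-)^2$), and then for a T$k$RDF the same forcing applies, giving $\omega(f)\ge n$ and hence $\gamma_{trk}(D)=n$ by Proposition~\ref{p1}. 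This direct route avoids the connectivity hypothesis entirely and is probably the cleaner way to write it; I would present the proof that way and remark that it is "an immediate consequence" in the spirit of the statement. No lengthy computation is needed in either version.
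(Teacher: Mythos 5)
Your proof follows the paper's intended argument exactly: the paper gives no separate proof, presenting the corollary as an immediate consequence of the chain $n=\gamma_{rk}(D)\le\gamma_{trk}(D)\le n$ obtained from Proposition~\ref{thm}, the lower bound of Theorem~\ref{t3}, and Proposition~\ref{p1}. Your worry about connectivity is sensible but moot, since the inequality $\gamma_{rk}(D)\le\gamma_{trk}(D)$ holds for every digraph without isolated vertices (every T$k$RDF is in particular a $k$RDF), so your alternative ``direct route'' is unnecessary --- and its parenthetical justification is in any case not quite right, because a single vertex with empty label only forces total weight at least $k$ on its in-neighborhood rather than $k$ distinct colors spread over $(\Delta^-)^2$ vertices, so no immediate contradiction with $k>(\Delta^-)^2$ arises from one vertex alone.
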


Next result is an immediate consequence of Corollary \ref{c1}.

\begin{corollary}\label{c2}
Let $k\ge2$ be an integer. If $D$ is a directed path or a directed cycle of order $n$, then  $\gamma_{trk}(D)=n.$
\end{corollary}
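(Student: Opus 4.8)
The plan is to derive Corollary \ref{c2} directly from Corollary \ref{c1} by verifying that directed paths and directed cycles satisfy its hypotheses whenever $k\ge 2$. First I would recall the relevant degree data: for a directed path $\overrightarrow{P_n}$ with $n\ge 2$, every vertex has out-degree at most $1$ and in-degree at most $1$, so $\Delta^+=1$ and $\Delta^-=1$; for a directed cycle $\overrightarrow{C_n}$ with $n\ge 2$, every vertex has out-degree exactly $1$ and in-degree exactly $1$, so again $\Delta^+=\Delta^-=1$. (If $n=1$ the path/cycle is trivial; I would handle or exclude that degenerate case, but the standard convention makes $n\ge 2$, and in any case $\gamma_{trk}=n$ is immediate for $n=1$ under the appropriate reading.) In particular $\Delta^-\ge\Delta^+\ge 1$ holds in every case.

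Next I would check the arithmetic condition $k>(\Delta^-)^2$. Since $\Delta^-=1$ for both families, $(\Delta^-)^2=1$, so the condition $k>1$ is exactly $k\ge 2$, which is our hypothesis. Thus all hypotheses of Corollary \ref{c1} are met, and applying it yields $\gamma_{trk}(D)=n$ for $D$ a directed path or directed cycle of order $n$ with $k\ge 2$.

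I do not anticipate a genuine obstacle here: this is a routine specialization, and the only thing requiring a moment's care is the boundary case $k=2$ (where $k>(\Delta^-)^2=1$ is still strict) and the degenerate order-one case. A clean write-up would simply state the degree computation, invoke Corollary \ref{c1}, and conclude. For completeness one might also remark that the lower bound $\min\{k,n\}\le\gamma_{trk}(D)$ from Proposition \ref{p1} together with the trivial upper bound $\gamma_{trk}(D)\le n$ already pins down $\gamma_{trk}(D)=n$ once $k\ge n$, so the substantive content of the corollary is the regime $2\le k<n$, which is precisely where Corollary \ref{c1} does the work.
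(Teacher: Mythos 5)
Your proposal is correct and is exactly the argument the paper intends: the paper states Corollary \ref{c2} as an immediate consequence of Corollary \ref{c1}, and your verification that $\Delta^-=\Delta^+=1$ for directed paths and cycles (so that $k>(\Delta^-)^2$ reduces to $k\ge2$) is precisely the required specialization.
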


Now we characterize all digraphs $D$ with $\gamma_{trk}(D)=k$.

\begin{theorem}\label{t4}
Let $k\ge2$ be an integer and let $D$ be a digraph of order $n$ with no isolated vertex.
Then $\gamma_{trk}(D)=k$ if and only if $D$ satisfies one of the following:
\begin{enumerate}
\item[$(a)$] $n=k$.
\item[$(b)$] $n\ge k+1$ and there exists a set $X=\{v_1,v_2,\dots,v_t\}\subseteq V(D)$, where $2\le t\le k$, such that the induced subdigraph $D[X]$ has no isolated vertex and $V(D)\backslash X\subseteq N^+(v_i)$ for $1\le i\le t$.
\end{enumerate}
\end{theorem}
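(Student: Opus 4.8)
The plan is to argue by characterizing when a T$k$RDF of weight exactly $k$ can exist, splitting on the value of $n$. For the forward direction, suppose $\gamma_{trk}(D)=k$ and let $f$ be a $\gamma_{trk}(D)$-function. If $n\le k$, then by Proposition \ref{p1} we have $\gamma_{trk}(D)=\min\{k,n\}=n$, forcing $n=k$, which is case $(a)$. So assume $n\ge k+1$. Let $X=\{v\in V(D):f(v)\ne\emptyset\}$ and write $X=\{v_1,\dots,v_t\}$. Since $f$ is a T$k$RDF, $D[X]$ has no isolated vertex, so in particular $|X|=t\ge 2$. The weight bound gives $k=\omega(f)=\sum_{i=1}^t|f(v_i)|\ge t$ (each nonempty set has size at least $1$), hence $2\le t\le k$. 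Because $n\ge k+1>t$, there is at least one vertex $u$ with $f(u)=\emptyset$; for any such $u$ the rainbow condition says $\bigcup_{v\in N^-(u)}f(v)=\{1,\dots,k\}$, and since $\sum_{i=1}^t|f(v_i)|=k$ exactly, every $v_i$ must lie in $N^-(u)$ and the sets $f(v_1),\dots,f(v_t)$ must be pairwise disjoint with union $\{1,\dots,k\}$ — in other words each $f(v_i)$ is a nonempty block of a partition of $\{1,\dots,k\}$, and each $v_i$ is an in-neighbor of $u$. As this holds for every $u\in V(D)\setminus X$, we get $V(D)\setminus X\subseteq N^+(v_i)$ for each $1\le i\le t$, which is case $(b)$.

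For the converse, in case $(a)$ we have $n=k$ and $D$ has no isolated vertex, so Proposition \ref{p1} gives $\gamma_{trk}(D)=\min\{k,n\}=k$ directly. In case $(b)$, define $f$ by choosing any partition $\{1,\dots,k\}=I_1\cup\cdots\cup I_t$ into $t$ nonempty parts (possible since $2\le t\le k$), setting $f(v_i)=I_i$ for $1\le i\le t$ and $f(x)=\emptyset$ for $x\in V(D)\setminus X$. Then $\omega(f)=\sum_{i=1}^t|I_i|=k$. Every vertex $x$ with $f(x)=\emptyset$ lies in $V(D)\setminus X$, hence is an out-neighbor of each $v_i$, so $\bigcup_{v\in N^-(x)}f(v)\supseteq\bigcup_{i=1}^t I_i=\{1,\dots,k\}$ and $x$ is rainbow-dominated; thus $f$ is a $k$RDF. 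Since $D[X]$ has no isolated vertex and $X=\{v:f(v)\ne\emptyset\}$, $f$ is in fact a T$k$RDF, giving $\gamma_{trk}(D)\le k$. The reverse inequality $\gamma_{trk}(D)\ge k$ follows from Proposition \ref{p1} because $n\ge k+1\ge k$ implies $\min\{k,n\}=k$. Hence $\gamma_{trk}(D)=k$.

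The only delicate point is the forward implication in the $n\ge k+1$ case: one must be careful to deduce that a \emph{single} vertex $u\notin X$ already forces all of $X\subseteq N^-(u)$ with the $f(v_i)$ partitioning $\{1,\dots,k\}$, using the tightness $\sum|f(v_i)|=k$ together with $|\bigcup_{v\in N^-(u)}f(v)|=k$; the inequality $k=|\bigcup_{v\in N^-(u)}f(v)|\le\sum_{v\in N^-(u)}|f(v)|\le\sum_{i=1}^t|f(v_i)|=k$ must be an equality throughout, which simultaneously yields $N^-(u)\supseteq X$ (no $v_i$ omitted) and pairwise disjointness of the $f(v_i)$. Everything else is a routine verification that the displayed function is a T$k$RDF and an application of Proposition \ref{p1} for the matching lower bound.
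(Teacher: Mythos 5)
Your proposal is correct and follows essentially the same route as the paper's proof: reduce the $n\le k$ case to Proposition \ref{p1}, and for $n\ge k+1$ use the tightness of $k=\bigl|\bigcup_{v\in N^-(u)}f(v)\bigr|\le\sum_{v\in N^-(u)}|f(v)|\le\omega(f)=k$ to force the nonempty labels to partition $\{1,\dots,k\}$ and every support vertex to be an in-neighbor of each unlabeled vertex, then exhibit the obvious partition-based function for the converse. The only cosmetic difference is that you work with the support $X$ of $f$ directly, while the paper first takes the labeled in-neighbors of one fixed unlabeled vertex and then observes these exhaust the support.
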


\begin{proof}
Suppose that $\gamma_{trk}(D)=k$.
Let $f$ be a $\gamma_{trk}(D)$-function and let $V_0=\{v\in V(D):f(v)=\emptyset\}$.
If $V_0=\emptyset$, then clearly $n=\gamma_{trk}(D)=k$ and so (a) holds.
If $n<k$, then it follows from Proposition \ref{p1} that $\gamma_{trk}(D)=n<k$, a contradiction.
Hence we may assume that $V_0\ne\emptyset$ and $n\ge k+1$.
Let $v\in V_0$. Clearly, $\bigcup_{x\in N^-(v)}f(x)=\{1,2,\dots,k\}$.
Let $v_1,v_2,\dots,v_t$ be all vertices in $N^-(v)$ such that $f(v_i)\ne\emptyset$ for $1\le i\le t$.
Then $\sum_{i=1}^t|f(v_i)|\ge|\bigcup_{i=1}^tf(v_i)|=|\bigcup_{x\in N^-(v)}f(x)|=k$.
On the other hand, obviously $\sum_{i=1}^t|f(v_i)|\le\gamma_{trk}(D)=k$.
As a result, we have $\sum_{i=1}^t|f(v_i)|=k$.
Moreover, since $\bigcup_{i=1}^tf(v_i)=\bigcup_{x\in N^-(v)}f(x)=\{1,2,\dots,k\}$, we obtain that $\{f(v_1),f(v_2),\dots,f(v_t)\}$ is a partition of $\{1,2,\dots,k\}$ and so $V_0=V(D)\backslash \{v_1,v_2,\dots,v_t\}$.
This implies that $D[\{v_1,v_2,\dots,v_t\}]$ has no isolated vertex, $2\le t\le k$ and $V(D)\backslash \{v_1,v_2,\dots,v_t\}=V_0\subseteq N^+(v_i)$ for $1\le i\le t$.
Thus (b) holds.

Conversely, suppose that one of the two conditions (a) and (b) in
the statement of the theorem holds.
If (a) holds, that is, if $n=k$, then we conclude from Proposition \ref{p1} that $\gamma_{trk}(D)=k$.
Suppose, next, that (b) holds.
Observe that the function $g:V(D)\rightarrow\mathcal{P}(\{1,2,\ldots,k\})$ defined by $g(v_i)=\{i\}$ for $1\le i\le t-1$, $g(v_t)=\{t,t+1,\dots,k\}$ and $g(x)=\emptyset$ otherwise, is a T$k$RDF on $D$ and hence $\gamma_{trk}(D)\le k$.
On the other hand, it follows from Proposition \ref{p1} that $\gamma_{trk}(D)\ge\min\{k,n\}=k$.
As a result, we have $\gamma_{trk}(D)=k$, which completes our proof.
\end{proof}

\begin{theorem}\label{t6}
Let $k$ be a positive integer and let $D$ be a digraph of order $n$ with no isolated vertex.
Then $$\gamma_{trk}(D)\ge \left\lceil\frac{kn+1}{\Delta^++k}\right\rceil$$
and this lower bound is sharp.
\end{theorem}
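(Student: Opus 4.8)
The plan is to run a counting argument starting from a $\gamma_{trk}(D)$-function. Fix such a function $f$, put $w=\gamma_{trk}(D)$, let $V_0=\{v\in V(D):f(v)=\emptyset\}$ and $V_+=V(D)\setminus V_0$, so that $w=\sum_{v\in V_+}|f(v)|\ge|V_+|$. First I would dispose of the case $V_0=\emptyset$: then $w=\sum_{v\in V(D)}|f(v)|\ge n$, while $w\le n$ by Proposition \ref{p1}, so $w=n$; since $D$ has no isolated vertex we have $\Delta^+\ge1$, hence $(\Delta^++k)n\ge kn+n\ge kn+1$ and the claimed inequality holds. So from now on $V_0\ne\emptyset$, and because every vertex of $V_0$ must have an in-neighbour carrying a nonempty set, $V_+\ne\emptyset$ as well.

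The engine of the proof is the double count of $\sum_{v\in V_0}\sum_{u\in N^-(v)}|f(u)|$. Read one way, $\bigcup_{u\in N^-(v)}f(u)=\{1,\dots,k\}$ for each $v\in V_0$, so the inner sum is at least $k$ and the whole sum is at least $k|V_0|$. Read the other way, reversing the order of summation, it equals $\sum_{u\in V_+}|f(u)|\cdot|N^+(u)\cap V_0|$.

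The delicate point — and the step I expect to be the main obstacle — is the estimate of $|N^+(u)\cap V_0|$, because using only $|N^+(u)\cap V_0|\le\Delta^+$ yields the weaker bound $(\Delta^++k)w\ge kn$, and the extra $+1$ has to be squeezed out of the totality condition. To this end let $A$ be the set of vertices of $V_+$ having an out-neighbour in $V_+$. Since $D[V_+]$ has no isolated vertex and $V_+\ne\emptyset$, we have $A\ne\emptyset$: any $u\in V_+$ is either in $A$ or has an in-neighbour in $V_+$, and that in-neighbour then lies in $A$. For $u\in A$ one out-neighbour of $u$ lies in $V_+$, so $|N^+(u)\cap V_0|\le d^+(u)-1\le\Delta^+-1$, whereas for $u\in V_+\setminus A$ we keep the trivial bound $\Delta^+$; splitting the sum accordingly gives
\[
k|V_0|\le\sum_{u\in V_+}|f(u)|\cdot|N^+(u)\cap V_0|\le\Delta^+w-\sum_{u\in A}|f(u)|\le\Delta^+w-|A|\le\Delta^+w-1 .
\]
Since $|V_0|=n-|V_+|\ge n-w$, this yields $k(n-w)\le\Delta^+w-1$, i.e. $(\Delta^++k)w\ge kn+1$; as $w$ is an integer, $\gamma_{trk}(D)=w\ge\lceil(kn+1)/(\Delta^++k)\rceil$, as desired.

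For sharpness I would, when $k\ge2$, take the digraph $D$ on $\{v_1,\dots,v_k\}\cup\{w_1,\dots,w_m\}$ in which $v_1,\dots,v_k$ induce a transitive tournament (all arcs $(v_i,v_j)$ with $i<j$) and $(v_i,w_j)\in A(D)$ for all $i,j$; then $n=k+m$, $\Delta^+=k-1+m$, $D$ has no isolated vertex, and by Theorem \ref{t4}(b) one has $\gamma_{trk}(D)=k$, which coincides with $\lceil(kn+1)/(\Delta^++k)\rceil$ as soon as $m>k(k-3)$ (for instance $m=k^2$). For $k=1$ the statement is the total-domination bound $\gamma_t(D)\ge\lceil(n+1)/(\Delta^++1)\rceil$, which is attained, for example, by the directed cycle of order $4$, where $\gamma_t=3=\lceil 5/2\rceil$.
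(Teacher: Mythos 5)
Your argument is correct and is essentially the paper's own: both proofs double-count the arcs from vertices with nonempty label into $V_0$, bound that count above by $\Delta^+\gamma_{trk}(D)$, and extract the extra $+1$ from the totality condition (the paper via a single arc of $D[V_+]$ not entering $V_0$, you via the set $A$ of vertices with an out-neighbour in $V_+$ — the same observation). Your sharpness examples (a transitive tournament dominating $m>k(k-3)$ sinks for $k\ge2$, and the directed $4$-cycle for $k=1$) differ from the paper's but check out equally well via Theorem~\ref{t4}(b).
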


\begin{proof} Let $f$ be a $\gamma_{trk}(D)$-function and let $V_i=\{v\in V(D): |f(v)|=i\}$ for each $i\in\{0,1,\ldots,k\}$.
Then $\gamma_{trk}(D)=|V_1|+2|V_2|+\dots+k|V_k|$ and $n=|V_0|+|V_1|+|V_2|+\dots+|V_k|$.
Let $A'=\{(u,v):u\in V(D)\backslash V_0\ \text{and}\ v\in V_0\}$.
Since $f$ is a $\gamma_{trk}(D)$-function, there must exist two distinct vertices $x,y\in V(D)\backslash V_0$ such that $(x,y)\in A(D)\backslash A'$ and so we obtain
\begin{eqnarray}
k|V_0|&\le&\sum_{(u,v)\in A'}|f(u)|  \nonumber\\
&\le&\Delta^+(|V_1|+2|V_2|+\dots+k|V_k|)-1  \nonumber\\
&=&\Delta^+\gamma_{trk}(D)-1.  \nonumber
\end{eqnarray}
Consequently, we have
\begin{eqnarray}
(\Delta^++k)\gamma_{trk}(D)&=&\Delta^+\gamma_{trk}(D)+k\gamma_{trk}(D)  \nonumber\\
&\ge&k|V_0|+k(|V_1|+2|V_2|+\dots+k|V_k|)+1  \nonumber\\
&=&k(|V_0|+|V_1|+|V_2|+\dots+|V_k|)+  \nonumber\\
   &&k(|V_2|+2|V_3|+\dots+(k-1)|V_k|)+1 \nonumber\\
&\ge&kn+1,  \nonumber
\end{eqnarray}
which implies the desired bound.

We next consider the sharpness of this inequality.
Let $k=1$ and let $P_3$ be a directed path of order $3$.
It is not hard to verify that $ \gamma_{trk}(P_3)=2$ and $\left\lceil\frac{k|V(P_3)|+1}{\Delta^++k}\right\rceil=\left\lceil\frac{4}{2}\right\rceil=2$, implying that $\gamma_{trk}(P_3)=\left\lceil\frac{k|V(P_3)|+1}{\Delta^++k}\right\rceil$.
Let $k\ge2$ be an integer and let $D$ be a digraph with vertex set $V(D)=\{u_1,u_2,\dots,u_k,$ $v_1,v_2,\dots,v_k\}$ and arc set $A(D)=\{(v_i,u_j):1\le i,j\le k\}\cup\{(v_i,v_{i+1}):1\le i\le k-1\}$.
Let $X=\{v_1,v_2,\dots,v_k\}$.
It follows from Theorem \ref{t4} (b) that $\gamma_{trk}(D)=k$.
Moreover, since $|V(D)|=2k$ and $\Delta^+=k+1$,
$\left\lceil\frac{k|V(D)|+1}{\Delta^++k}\right\rceil=\left\lceil\frac{2k^2+1}{2k+1}\right\rceil=k.$
As a result, we have $\gamma_{trk}(D)=\left\lceil\frac{k|V(D)|+1}{\Delta^++k}\right\rceil$.
\end{proof}

\begin{theorem}\label{t7}
For any digraph $D$ with no isolated vertex and two positive integers $k$ and $k'$ with $k'>k$,
$$\gamma_{trk'}(D)\leq\gamma_{trk}(D)+(k'-k)\left\lfloor\frac{\gamma_{trk}(D)}{k}\right\rfloor$$
and this upper bound is sharp.
\end{theorem}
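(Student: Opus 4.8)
The plan is to start from a $\gamma_{trk}(D)$-function $f$ and convert it into a $T k'$RDF by ``thickening'' the labels on a carefully chosen subset of vertices. Write $m=\gamma_{trk}(D)$ and let $q=\lfloor m/k\rfloor$. Since $f$ has weight $m$ and every vertex contributes at most $k$ to the weight, there are at least $q$ vertices carrying the full label set $\{1,2,\dots,k\}$; more precisely, if $V_j=\{v:|f(v)|=j\}$ then $m=\sum_{j=1}^k j|V_j|$, and a short counting argument shows $\sum_{j=1}^k |V_j|\ge\lceil m/k\rceil\ge q$ while the number of vertices with $|f(v)|=k$ together with a ``deficiency'' count gives us at least $q$ vertices we can promote to $\{1,2,\dots,k'\}$ at a total extra cost of $(k'-k)q$. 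The first step is therefore to make this selection rigorous: I would pick a set $W$ of $q$ vertices in $V(D)\setminus V_0$ so that redefining $g(w)=\{1,2,\dots,k'\}$ for $w\in W$ and $g(v)=f(v)$ for $v\notin W$ raises the weight by exactly $(k'-k)q$ — the cleanest way is to order the nonempty vertices by decreasing $|f(v)|$ and take the first $q$, noting that their combined $f$-weight is at least $k\cdot q$ by the pigeonhole/averaging bound, which is what makes the promotion cost at most $(k'-k)q$ rather than $(k'-k)\cdot(\text{something larger})$.

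Next I would verify that $g$ is a $T k'$RDF. For the rainbow condition: any vertex $v$ with $g(v)=\emptyset$ has $f(v)=\emptyset$, so $v\notin W$ and $v\in V_0$; we need $\bigcup_{u\in N^-(v)}g(u)=\{1,2,\dots,k'\}$. The original condition gives $\bigcup_{u\in N^-(v)}f(u)=\{1,2,\dots,k\}$, so the colours $1,\dots,k$ are already covered; to cover $k+1,\dots,k'$ we need at least one in-neighbour of $v$ to lie in $W$. This is the crux and the main obstacle: an arbitrary choice of $W$ need not dominate every empty vertex with a full-coloured vertex. I expect to handle this by choosing $W$ more cleverly — among all ways of selecting $q$ nonempty vertices whose total $f$-weight is at least $kq$, pick one; if some $v\in V_0$ has no in-neighbour in $W$ but $v$ does have an in-neighbour $u$ with $f(u)=\{1,\dots,k\}$ (which must exist, since $v$'s in-neighbours cover all $k$ colours so at least one in-neighbour is ``heavy'' enough to be swappable), swap that $u$ into $W$. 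A cleaner route, which I would try first, is to argue that the extremal examples forcing equality have a rigid structure and instead prove the bound by directly exhibiting $W$ as the set of full-label vertices of a specially chosen optimal $f$; but if that fails, the swapping argument above, iterated and shown to terminate because it strictly increases $|N^-[W]\cap V_0|$ or a similar monotone quantity, will finish it.

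For the totality condition of $g$: the support of $g$ contains the support of $f$, and $D$ restricted to the support of $f$ has no isolated vertex since $f$ is a $T k$RDF; adding more vertices to a set only helps, so $D$ restricted to $\{v:g(v)\ne\emptyset\}$ has no isolated vertex either. Hence $g$ is a $T k'$RDF and $\gamma_{trk'}(D)\le\omega(g)=m+(k'-k)q=\gamma_{trk}(D)+(k'-k)\lfloor\gamma_{trk}(D)/k\rfloor$, as required. Finally, for sharpness I would look for a digraph where an optimal $T k$RDF is forced to put full labels on exactly $\lfloor m/k\rfloor$ vertices and where $\gamma_{trk'}$ genuinely requires promoting all of them — a natural candidate is a disjoint union of ``in-stars'' $S_{i_j}$ with $i_j$ large (so every support vertex must carry all $k$ colours, as in the remarks after Theorem~\ref{t2}), glued along selected leaves into a connected digraph so the totality constraint costs essentially nothing; there $\gamma_{trk}=kt$ with $t$ full-label vertices and $\gamma_{trk'}=k't$, matching the bound. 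I would state this example and verify the two equalities by the same support-vertex local-weight argument used in the Theorem~\ref{t2} remarks.
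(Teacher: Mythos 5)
There is a genuine gap here --- in fact two. First, your cost accounting fails: it is not true that the $q=\lfloor\gamma_{trk}(D)/k\rfloor$ heaviest vertices in the support of $f$ have combined $f$-weight at least $kq$. If $f$ assigns only singletons (which does happen for optimal functions, e.g.\ the function built in the proof of Theorem~\ref{t4} with $t=k$, or the optimal T$2$RDF on $P_2\square P_n$), then the $q$ heaviest vertices have total weight $q$, and replacing their labels by $\{1,2,\dots,k'\}$ costs $(k'-1)q$, which exceeds the allowed budget $(k'-k)q$ whenever $k\ge2$. Second --- and you flag this yourself as the crux --- after choosing $W$ you must guarantee that every vertex $v$ with $f(v)=\emptyset$ has an in-neighbour in $W$, and the swapping argument is only gestured at: you exhibit no monotone quantity showing termination while simultaneously preserving the weight bound and the coverage already achieved, and a swap that fixes one empty vertex can break another. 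As written, the proof does not go through.

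Both difficulties vanish with a different choice of what to promote and how. For $i\in\{1,\dots,k\}$ let $V_i=\{v\in V(D): i\in f(v)\}$ (colour classes, not weight classes); then $\sum_{i=1}^k|V_i|=\omega(f)=\gamma_{trk}(D)$, so after relabelling colours the least popular class satisfies $|V_k|\le\lfloor\gamma_{trk}(D)/k\rfloor$. Define $g(v)=f(v)\cup\{k+1,\dots,k'\}$ for $v\in V_k$ and $g(v)=f(v)$ otherwise. Adding the new colours on top of $f(v)$ (rather than overwriting with the full set) makes the extra cost exactly $(k'-k)|V_k|$ with no averaging needed; and the coverage problem is solved for free, because any $v$ with $f(v)=\emptyset$ must have an in-neighbour $u$ with $k\in f(u)$, i.e.\ $u\in V_k$, whence $\bigcup_{u\in N^-(v)}g(u)=\{1,\dots,k'\}$. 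Totality is inherited since the support of $g$ contains that of $f$. This is the paper's argument. Your sharpness example is essentially the paper's (disjoint directed stars with many leaves, support vertices joined so that they induce a subdigraph with no isolated vertex), and that part is fine.
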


\begin{proof}
Let $f$ be a $\gamma_{trk}(D)$-function. For each $i\in\{1,2,\ldots,k\}$, let $V_i=\{v\in V(D): i\in f(v)\}$.
It follows that $\gamma_{trk}(D)=\omega(f)=\sum_{i=1}^k|V_i|$.
Without loss of generality, we may assume that $|V_1|\geq |V_2|\geq\cdots\geq |V_k|$. Clearly $|V_k|\leq \lfloor\frac{\gamma_{trk}(D)}{k}\rfloor$.
Then the function $g:V(D)\rightarrow\mathcal{P}(\{1,2,\ldots,k\})$ defined by
\begin{displaymath}
g(v)\\ =\left\{\begin{array}{ll}
f(v)\cup\{k+1,k+2,\ldots,k'\},&\textrm{if}\ v\in V_k,\\
f(v),&\textrm{otherwise},
\end{array}\right.
 \end{displaymath}
is a T$k'$RDF on $D$ and hence
\begin{align}
\gamma_{trk'}(D)\leq& \omega(g)\nonumber\\
=&\sum_{v\in V_k}|f(v)\cup\{k+1,k+2,\ldots,k'\}|+\sum_{v\in V(D)\backslash V_k}|f(v)|\nonumber\\
=&(k'-k)|V_k|+\sum_{v\in V(D)}|f(v)|\nonumber\\
\leq&\gamma_{trk}(D)+(k'-k)\left\lfloor\frac{\gamma_{trk}(D)}{k}\right\rfloor,\nonumber
\end{align}
establishing the desired upper bound.

To prove the sharpness,
let $t\ge2$ and let $D$ be the digraph obtained from the disjoint union of $t$ directed stars $S_{i_1}, S_{i_2}, \dots, S_{i_t}$, where $i_j\ge k'+1$ for $1\le j\le t$,
by adding any number of arcs joining the $t$ centers of these directed stars so that the subdigraph induced by these $t$ centers has no isolated vertex.
One can check that the function $f:V(D)\rightarrow\mathcal{P}(\{1,2,\ldots,k\})$ that assigns the set $\{1,2,\dots,k\}$ to every support vertices of $D$ and the empty set to the remaining vertices of $D$, is a unique $\gamma_{trk}(D)$-function with weight $kt$; and the function $g:V(D)\rightarrow\mathcal{P}(\{1,2,\ldots,k'\})$ that assigns the set $\{1,2,\dots,k'\}$ to every support vertices of $D$ and the empty set to the remaining vertices of $D$, is a unique $\gamma_{trk'}(D)$-function with weight $k't$.
Therefore, we obtain $$\gamma_{trk'}(D)=k't=\gamma_{trk}(D)+(k'-k)\left\lfloor\frac{\gamma_{trk}(D)}{k}\right\rfloor,$$
 which completes our proof.
\end{proof}

\section{Cartesian product of directed paths}

Let $P_n$ denote the directed path of order $n$ with vertex set $V(P_n)=\{0, 1, 2, \ldots, n-1\}$ and
arc set $A(P_n)=\{(i, i+1):i=0, 1, 2, \ldots, n-2\}$.

In this section, we shall determine the exact values of $\gamma_{tr2}(P_2\square P_n)$, $\gamma_{tr3}(P_2\square P_n)$ and $\gamma_{tr3}(P_3\square P_n)$.
Now we consider the exact value of total $2$-rainbow domination number of the Cartesian product $P_2\square P_n$.
To our aim, the following lemmas are essential.

\begin{lemma}\label{l1}
Let $n\ge2$ be an integer and let $f$ be a $\gamma_{tr2}(P_2\square P_n)$-function such that the number of vertices assigned $\emptyset$ under $f$ is minimum.
Then $|f((0,0))|+|f((1,0))|\ge2$ and $|f((0,j))|+|f((1,j))|\ge1$ for each $j\in \{1,2,\dots,n-1\}$.
\end{lemma}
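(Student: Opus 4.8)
The plan is to analyze the structure of the grid $P_2 \square P_n$ near column $0$ and exploit the minimality of the number of $\emptyset$-vertices. Note that in $P_2 \square P_n$ the two vertices $(0,0)$ and $(1,0)$ are sources (in-degree $0$): the only arcs into $(0,0)$ would have to come from $(1,0)$ (if $(0,1)\in A(P_2)$, which it is not, since $P_2$ has the single arc $(0,1)$ meaning $0\to 1$) or from column $-1$ (nonexistent). Concretely, $N^-((0,0)) = \emptyset$ and $N^-((1,0)) = \{(0,0)\}$, because the only in-arc to $(1,0)$ inside $P_2\square P_n$ comes from $(0,0)$ via the $P_2$-arc $0\to 1$. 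Since a $k$RDF requires every vertex with value $\emptyset$ to be rainbow-dominated by its in-neighbors, a vertex with no in-neighbor (here $(0,0)$) must receive a nonempty set; and $(1,0)$, whose only in-neighbor is $(0,0)$, must either receive a nonempty set or have $f((0,0)) = \{1,2\}$. In either case $|f((0,0))| + |f((1,0))| \ge 2$, which is the first claimed inequality. I would spell this out as the first step.

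For the second inequality, fix $j \in \{1,\dots,n-1\}$ and suppose for contradiction that $f((0,j)) = f((1,j)) = \emptyset$. The key observation is that the out-neighbors of column $j$ all lie in column $j+1$ (together with the single intra-column arc), so column $j$ contributes nothing to rainbow-dominating any vertex in columns $0,\dots,j-1$, nor does it help the induced-subdigraph-no-isolated-vertex condition for vertices there. I would then modify $f$ by reassigning the value of one vertex of column $j$ — say set $f'((0,j)) = \{1\}$ (or $\{1,2\}$ if needed to maintain total-ness) — and argue that $f'$ is still a T$2$RDF. Reassigning a vertex from $\emptyset$ to a nonempty set can only help the rainbow-domination condition at other vertices and the only thing to check is that $f'$ is still total and that we have not increased the weight beyond $\gamma_{tr2}$; but here the point is subtler, since we must not increase the weight at all. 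The cleaner route: show that if column $j$ is entirely $\emptyset$, then one can \emph{shift weight} — remove a unit somewhere to the right of column $j$ and place it in column $j$ — producing a T$2$RDF of the same weight but with fewer $\emptyset$-vertices, contradicting minimality.

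The main obstacle will be making this "shift weight without increasing it" argument airtight: I need to show that if column $j$ ($1\le j\le n-1$) is all-$\emptyset$, there is some vertex strictly to the right carrying weight that can be decreased, or equivalently that an all-$\emptyset$ column $j$ forces some slack. I would handle this by considering the rightmost all-$\emptyset$ column, or by a direct exchange: since $(0,j)$ and $(1,j)$ are $\emptyset$, each is rainbow-dominated from the left (columns $\le j-1$) — in fact the only in-neighbor of $(0,j)$ is $(0,j-1)$, so $f((0,j-1)) = \{1,2\}$, and the in-neighbors of $(1,j)$ are $(1,j-1)$ and $(0,j)$, so $f((1,j-1)) = \{1,2\}$ as well (using $f((0,j))=\emptyset$). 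Now redefine $f'$ to reduce $f((1,j-1))$ from $\{1,2\}$ to $\{1\}$ and set $f'((1,j)) = \{1\}$; one checks the rainbow condition is preserved at $(1,j)$ (still $\emptyset$? no — now nonempty, trivially fine) and at every vertex of column $j+1$ whose domination used column $j$ (it did not, since column $j$ was $\emptyset$), and that the induced subdigraph on $\{v: f'(v)\ne\emptyset\}$ still has no isolated vertex — the arc $(1,j-1)\to(1,j)$ or $(0,j-1)\to(0,j)$-type adjacencies keep things connected locally; possibly one also sets $f'((0,j))=\{1\}$ to be safe, but that would change weight, so care is needed. The weight is unchanged, and the number of $\emptyset$-vertices has strictly decreased, contradicting the choice of $f$. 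I expect verifying total-ness of $f'$ in the few boundary sub-cases (e.g. $j = n-1$, or $(1,j-1)$ being isolated in the new support) to be the only genuinely fiddly part; everything else is immediate from the source/neighborhood structure of $P_2\square P_n$.
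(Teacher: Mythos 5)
Your proposal is correct and follows essentially the same route as the paper: the source-vertex argument at column $0$, and then, for an all-$\emptyset$ column $j$, the deduction that $f((0,j-1))=f((1,j-1))=\{1,2\}$ followed by a weight-preserving redistribution that strictly decreases the number of $\emptyset$-vertices, contradicting the extremal choice of $f$. The paper's redistribution assigns $\{1\}$ to all four vertices of columns $j-1$ and $j$ rather than your asymmetric exchange, but both verifications go through identically.
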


\begin{proof} Since $d^-((0,0))=0$, $|f((0,0))|\ge1$.
If $|f((0,0))|=2$, then $|f((0,0))|+|f((1,0))|\ge2$ and
if $|f((0,0))|=1$, then clearly $|f((1,0))|\ge1$ and so $|f((0,0))|+|f((1,0))|\ge2$.
We now prove that $|f((0,j))|+|f((1,j))|\ge1$ for each $j\in \{1,2,\dots,n-1\}$.
Suppose, to the contrary, that there exists some $j_0\in \{1,2,\dots,n-1\}$ such that $|f((0,j_0))|+|f((1,j_0))|=0$.
Clearly $f((0,j_0-1))=f((1,j_0-1))=\{1,2\}$.
Then the function $g:V(D)\rightarrow\mathcal{P}(\{1,2\})$ defined by
$g((0,j_0-1))=g((0,j_0))=g((1,j_0-1))=g((1,j_0))=\{1\}$ and $g((i,j))=f((i,j))$ otherwise, is a T$2$RDF on $P_2\square P_n$ with weight $\omega(f)$ and so $g$ is also a $\gamma_{tr2}(P_2\square P_n)$-function, a contradiction to the choice of $f$.
Thus $|f((0,j))|+|f((1,j))|\ge1$ for each $j\in \{1,2,\dots,n-1\}$.
\end{proof}

\begin{lemma}\label{l2}
Let $n\ge2$ be an integer, $f$ be a $\gamma_{tr2}(P_2\square P_n)$-function such that the number of vertices assigned $\emptyset$ under $f$ is minimum and let $a_j=|f((0,j))|+|f((1,j))|$ for each $j\in \{0,1,\dots,n-1\}$.
Then for each $j\in \{0,1,\dots,n-2\}$, $$a_j+a_{j+1}\ge3.$$
\end{lemma}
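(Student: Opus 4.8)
The plan is to show that two consecutive "column weights" $a_j + a_{j+1}$ cannot be $\le 2$, by assuming the contrary and constructing a T$2$RDF with strictly fewer empty vertices (or smaller weight), contradicting the minimality in the choice of $f$. By Lemma~\ref{l1} we already know $a_0 \ge 2$ and $a_j \ge 1$ for $j \ge 1$, so the only potentially problematic case is when $a_j = a_{j+1} = 1$ for some $j$ with $1 \le j \le n-2$ (the pair involving $j=0$ already gives $a_0 + a_1 \ge 2+1 = 3$). So I would fix such a $j$ and derive a contradiction.

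First I would analyze the structure forced by $a_j = 1$. This means exactly one of the two vertices $(0,j), (1,j)$ receives a singleton, say $\{c\}$, and the other receives $\emptyset$; similarly for column $j+1$. The empty vertex in column $j$ must be rainbow-dominated by its in-neighbors. Its in-neighbors lie in column $j-1$ (same row) and possibly the other vertex of column $j$ (via the $P_2$-arc, if that arc points toward it). The key point is that the singleton-carrying vertices contribute only one color each, so covering $\{1,2\}$ at an empty vertex of column $j$ forces a vertex of column $j-1$ to carry the missing color; pushing this one step further, and using that $a_{j+1}=1$ as well, I expect to force $a_{j-1}$ to be large (in fact the row-neighbor in column $j-1$ must be nonempty and in fact must supply a color not available from column $j$'s singleton). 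Running the same argument forward from column $j+1$ toward column $j+2$, I get constraints on both neighbors.

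The main step is the surgery. Having identified the nonempty vertices near columns $j, j+1$, I would redefine $f$ on columns $j-1, j, j+1$ (and possibly $j+2$) by the same device used in Lemma~\ref{l1}: set the two vertices of some column to $\{1\}$ together with a carefully chosen singleton reassignment, so that (i) every empty vertex still has its in-neighbours rainbow-covering $\{1,2\}$, (ii) the nonempty-induced subdigraph still has no isolated vertex, and (iii) the total weight does not increase while the number of empty vertices strictly decreases — or the weight strictly decreases. Either outcome contradicts that $f$ is a $\gamma_{tr2}$-function with the minimum number of $\emptyset$-vertices. I would handle the boundary subcase $j+1 = n-1$ separately, since then column $j+2$ does not exist and the rainbow condition on column $j+1$ is vacuous, which actually makes the surgery easier.

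The hard part will be bookkeeping the several sub-cases coming from the orientation of the vertical ($P_2$) arcs and from which row carries the singleton in columns $j$ and $j+1$ (there are a handful of combinations), together with checking in each that the modified function is genuinely a T$2$RDF — in particular that no new isolated vertex is created in the induced subdigraph of nonempty vertices. I would try to reduce the casework by noting that in $P_2 \square P_n$ the vertical arcs at a fixed column go in one direction only, so "the other vertex of column $j$" is an in-neighbour of exactly one of the two column-$j$ vertices; exploiting this asymmetry should collapse several cases into one.
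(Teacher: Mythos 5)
Your plan is essentially the paper's proof: reduce to $a_{j_0}=a_{j_0+1}=1$ with $j_0\ge1$, use the fact that $(0,j)$ has only the single in-neighbor $(0,j-1)$ to force $|f((0,j_0+1))|=1$, $|f((1,j_0+1))|=0$, $|f((1,j_0))|=1$, $|f((0,j_0))|=0$, hence $f((0,j_0-1))=\{1,2\}$ and (by the no-isolated-vertex condition at $(1,j_0)$) $|f((1,j_0-1))|\ge1$, and then perform exactly the surgery you describe --- reset $f((0,j_0-1))$ and $f((0,j_0))$ to $\{1\}$, which preserves the weight and the T$2$RDF property while strictly decreasing the number of vertices assigned $\emptyset$, contradicting the choice of $f$. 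The case analysis you anticipate does collapse via the one-directional vertical arcs just as you predict, and no modification of columns $j+1$ or $j+2$ is needed, so the approach is the same and sound.
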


\begin{proof} It follows from Lemma \ref{l1} that $a_0+a_1\ge3.$
We now claim that $a_j+a_{j+1}\ge3$ for each $j\in \{1,2,\dots,n-2\}$.
Suppose, to the contrary, that there exists some $j_0\in \{1,2,\dots,n-2\}$ such that $a_{j_0}+a_{j_0+1}\le2$.
By Lemma \ref{l1}, we have that $a_{j_0}\ge1$ and $a_{j_0+1}\ge1$ and so $a_{j_0}+a_{j_0+1}=2$.
This implies that $a_{j_0}=a_{j_0+1}=1$.
If $|f((0,j_0+1))|=0$ and $|f((1,j_0+1))|=1$, then $f((0,j_0))=\{1,2\}$ and so $a_{j_0}\ge2$, a contradiction.
Hence we may assume that $|f((0,j_0+1))|=1$ and $|f((1,j_0+1))|=0$.
Then $\{1,2\}\backslash f((0,j_0+1))\subseteq f((1,j_0))$, implying that $|f((1,j_0))|\ge1$.
Moreover, since $a_{j_0}=|f((0,j_0))|+|f((1,j_0))|=1$, we have $|f((1,j_0))|=1$ and $|f((0,j_0))|=0$.
Clearly, $f((0,j_0-1))=\{1,2\}$.
Since $|f((0,j_0))|=|f((1,j_0+1))|=0$ and $|f((1,j_0))|=1$, we conclude from the definition of $\gamma_{tr2}(P_2\square P_n)$-function that $|f((1,j_0-1))|\ge1$.
Then the function $g:V(P_2\square P_n)\rightarrow\mathcal{P}(\{1,2\})$ defined by
$g((0,j_0-1))=g((0,j_0))=\{1\}$ and $g((i,j))=f((i,j))$ otherwise, is also a $\gamma_{tr2}(P_2\square P_n)$-function, a contradiction to the choice of $f$.
Consequently, $a_j+a_{j+1}\ge3$ for each $j\in \{1,2,\dots,n-2\}$.
\end{proof}

\begin{proposition}\label{p2}
For $n\ge2$,
$$\gamma_{tr2}(P_2\square P_n)=\left\lceil\frac{3n}{2}\right\rceil.$$
\end{proposition}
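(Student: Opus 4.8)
The plan is to establish the two matching bounds $\gamma_{tr2}(P_2\square P_n)\ge\lceil 3n/2\rceil$ and $\gamma_{tr2}(P_2\square P_n)\le\lceil 3n/2\rceil$ separately.

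For the lower bound I would feed Lemmas~\ref{l1} and~\ref{l2} into a pairing-up summation. Fix a $\gamma_{tr2}(P_2\square P_n)$-function $f$ for which the number of vertices assigned $\emptyset$ is minimum, and set $a_j=|f((0,j))|+|f((1,j))|$, so that $\gamma_{tr2}(P_2\square P_n)=\sum_{j=0}^{n-1}a_j$. If $n$ is even, I would partition the index set $\{0,1,\dots,n-1\}$ into the $n/2$ consecutive pairs $\{0,1\},\{2,3\},\dots,\{n-2,n-1\}$; Lemma~\ref{l2} gives $a_{2i}+a_{2i+1}\ge3$ on each pair, so the total is at least $3n/2=\lceil 3n/2\rceil$. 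If $n$ is odd, I would keep $a_0$ on its own and pair up the rest as $\{1,2\},\{3,4\},\dots,\{n-2,n-1\}$ ($(n-1)/2$ pairs); combining $a_0\ge2$ from Lemma~\ref{l1} with $a_{2i-1}+a_{2i}\ge3$ from Lemma~\ref{l2} yields $\gamma_{tr2}(P_2\square P_n)\ge 2+\frac{3(n-1)}{2}=\frac{3n+1}{2}=\lceil 3n/2\rceil$.

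For the upper bound I would exhibit one explicit function and check it is a T$2$RDF. Define $f$ on $P_2\square P_n$ by $f((0,j))=\{1\}$ for every $j$, $f((1,j))=\{2\}$ when $j$ is even, and $f((1,j))=\emptyset$ when $j$ is odd. The vertices receiving $\emptyset$ are exactly the $(1,j)$ with $j$ odd, and such a vertex has in-neighbours $(1,j-1)$ (carrying $\{2\}$, since $j-1$ is even) and $(0,j)$ (carrying $\{1\}$), so $\bigcup_{u\in N^-((1,j))}f(u)=\{1,2\}$; hence $f$ is a $2$RDF. The set of vertices with $f\ne\emptyset$ includes the entire top row $\{(0,j):0\le j\le n-1\}$, which induces a directed path and therefore has no isolated vertex, while each bottom vertex $(1,j)$ with $j$ even has the nonempty in-neighbour $(0,j)$; thus the subdigraph induced by $\{v:f(v)\ne\emptyset\}$ has no isolated vertex, so $f$ is a T$2$RDF. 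Since exactly $\lceil n/2\rceil$ indices in $\{0,\dots,n-1\}$ are even, $\omega(f)=n+\lceil n/2\rceil=\lceil 3n/2\rceil$, which gives the upper bound and, together with the first part, the claimed equality.

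The whole argument is routine once Lemmas~\ref{l1} and~\ref{l2} are in hand; the only places needing a bit of care are the parity split in the lower-bound summation and the simultaneous verification that the displayed $f$ meets both the rainbow condition and the ``no isolated vertex'' condition. I do not expect a genuine obstacle here.
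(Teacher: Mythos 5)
Your proposal is correct and follows essentially the same route as the paper: the identical pairing of the $a_j$'s via Lemmas~\ref{l1} and~\ref{l2} for the lower bound, and the identical explicit function (full first row, alternate vertices of the second row) for the upper bound. The only difference is that you verify the T$2$RDF conditions for the construction in more detail than the paper does, which is a harmless improvement.
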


\begin{proof} Let $f$ be a $\gamma_{tr2}(P_2\square P_n)$-function such that the number of vertices assigned $\emptyset$ under $f$ is minimum
and let $a_j=|f((0,j))|+|f((1,j))|$ for each $j\in \{0,1,\dots,n-1\}$.
By Lemmas \ref{l1} and \ref{l2}, we have that $a_0\ge2$ and $a_j+a_{j+1}\ge3$ for each $j\in \{0,1,\dots,n-2\}$. Therefore, if $n$ is odd, then
$$\gamma_{tr2}(P_2\square P_n)=\omega(f)=a_0+\sum_{j=1}^{(n-1)/2}(a_{2j-1}+a_{2j})\ge 2+\frac{3(n-1)}{2}=\left\lceil\frac{3n}{2}\right\rceil,$$
and if $n$ is even, then
$$\gamma_{tr2}(P_2\square P_n)=\omega(f)=\sum_{j=0}^{(n-2)/2}(a_{2j}+a_{2j+1})\ge\frac{3n}{2}=\left\lceil\frac{3n}{2}\right\rceil.$$

To show the upper bound, we now provide a T$2$RDF $g:V(P_2\square P_n)\rightarrow\mathcal{P}(\{1,2\})$ defined by
\begin{displaymath}
g((i,j))\\ =\left\{\begin{array}{ll}
\{1\},&\textrm{if}\ i=0\ \textrm{and}\ j\in \{0,1,\dots,n-1\},\\
\{2\},&\textrm{if}\ i=1\ \textrm{and}\ j\in \{0,1,\dots,n-1\}\ \textrm{is even} ,\\
\emptyset,&\textrm{otherwise},
\end{array}\right.
 \end{displaymath}
and so
$$\gamma_{tr2}(P_2\square P_n)\le\omega(g)=n+\left\lceil\frac{n}{2}\right\rceil=\left\lceil\frac{3n}{2}\right\rceil,$$
which completes our proof.
\end{proof}

\begin{lemma}\label{l3}
Let $n\ge2$ be an integer, $f$ be a $\gamma_{tr3}(P_2\square P_n)$-function such that the number of vertices assigned $\emptyset$ under $f$ is minimum and let $a_j=|f((0,j))|+|f((1,j))|$ for each $j\in \{0,1,\dots,n-1\}$.
Then for each $j\in \{0,1,\dots,n-1\}$, $$a_j\ge2.$$
\end{lemma}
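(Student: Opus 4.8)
The plan is to mimic the two-case analysis used for the total $2$-rainbow setting in Lemmas~\ref{l1} and~\ref{l2}, but now exploiting that $k=3$ forces a larger deficiency whenever a column is cheap. First I would dispose of the column $j=0$ directly: since $d^-((0,0))=0$ and $d^-((1,0))\le 1$ with the unique possible in-neighbor being $(0,0)$, both vertices of column $0$ are ``source-like'', so $f((0,0))\ne\emptyset$, and if $f((1,0))=\emptyset$ then $f((0,0))=\{1,2,3\}$; in either case $a_0\ge 2$. (More carefully: if $|f((0,0))|=1$ then the total-rainbow condition forces a nonempty neighbor of $(0,0)$ in the induced subdigraph, i.e.\ $f((1,0))\ne\emptyset$ or $f((0,1))\ne\emptyset$; only the former contributes to $a_0$, so one must instead argue via minimality of the number of $\emptyset$-vertices that $a_0\ge 2$ cannot fail.) I expect the clean statement to be: $|f((0,0))|+|f((1,0))|\ge 2$, proved exactly as the first paragraph of Lemma~\ref{l1}'s proof but with $\{1,2\}$ replaced by $\{1,2,3\}$.

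For an interior column $j\in\{1,\dots,n-1\}$, suppose for contradiction that $a_j\le 1$. If $a_j=0$, then both $(0,j-1)$ and $(1,j-1)$ must rainbow-dominate their respective out-neighbor in column $j$, forcing $f((0,j-1))=f((1,j-1))=\{1,2,3\}$; then I would reassign $g((0,j-1))=g((1,j-1))=g((0,j))=g((1,j))=\{1\}$ (and $g=f$ elsewhere), which is a T$3$RDF of weight $\omega(f)-4<\omega(f)$, contradicting minimality of the weight. (One must check $g$ is still a valid T$3$RDF: the only vertices whose domination could be disturbed are the out-neighbors of column $j-1$ and $j$; but column $j+1$'s vertices, if empty, were being dominated by column $j$ having value $\emptyset$, hence by column $j$'s own in-neighbors, so they are fine — actually if $a_j=0$ then column $j+1$'s empty vertices were dominated by $f((i,j))=\emptyset$, impossible unless they have another in-neighbor, namely $(i,j-1)$... this needs care, see below.) If $a_j=1$, the single label, say $\ell$, sits at one vertex of column $j$ while the other is $\emptyset$; the $\emptyset$-vertex of column $j$ needs $\{1,2,3\}$ from its in-neighbors, which live in column $j-1$ and the same column $j$, so the in-neighbor in column $j-1$ must carry $\{1,2,3\}\setminus\{\ell\}$ or $\{1,2,3\}$, giving $a_{j-1}\ge 2$; moreover the nonempty vertex of column $j$, having a singleton value, needs a nonempty neighbor in the induced subdigraph, and I would track where that neighbor lies to set up a weight-reducing or $\emptyset$-count-reducing reassignment as in Lemma~\ref{l2}.

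The main obstacle will be the bookkeeping in the $a_j=1$ case: unlike $k=2$, a singleton $\{\ell\}$ with $k=3$ is ``very deficient'', so the $\emptyset$-vertex in column $j$ cannot be rainbow-dominated by column $j$ alone, and I must carefully enumerate which of $(0,j-1),(1,j-1)$ carries the missing labels and whether the forced nonempty neighbor of the $\{\ell\}$-vertex lies in column $j-1$ or column $j+1$. In each subcase the goal is the same: produce a T$3$RDF $g$ with either strictly smaller weight (contradicting $\gamma_{tr3}$-minimality) or the same weight but strictly fewer $\emptyset$-vertices (contradicting the choice of $f$). A secondary subtlety, common to all cases, is verifying that the modified function $g$ still satisfies the total condition and still rainbow-dominates the vertices of column $j+1$; here I would use that whenever column $j$ is cheap, the relevant vertices of column $j+1$ either are nonempty or receive their rainbow requirement partly from their in-neighbor in column $j$, so lowering column $j$'s values to $\{1\}$ (rather than to $\emptyset$) preserves domination of column $j+1$. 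Having established $a_j\ge 2$ for all $j$ (combining $a_0\ge 2$ with the interior argument, and treating $j=n-1$ the same way since it only uses the in-neighbor in column $n-2$ and within column $n-1$), the lemma follows.
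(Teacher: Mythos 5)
Your plan follows essentially the same route as the paper's proof: assume $a_{j_0}\le 1$, use the rainbow condition to force heavy values ($\{1,2,3\}$ or a set of size at least $2$) on the relevant in-neighbors in column $j_0-1$, and then reassign those vertices together with the empty vertex of column $j_0$ to $\{1\}$ so as to contradict either the minimality of the weight or the minimality of the number of vertices assigned $\emptyset$; the subcases you defer in the $a_{j_0}=1$ analysis close exactly as you describe (when $|f((0,j_0))|=1,|f((1,j_0))|=0$ set $g((1,j_0-1))=g((1,j_0))=\{1\}$, and when $|f((0,j_0))|=0,|f((1,j_0))|=1$ set $g((0,j_0-1))=g((0,j_0))=\{1\}$, adding $g((1,j_0-1))=\{1\}$ if it was empty). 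The only slip is arithmetic: in the $a_{j_0}=0$ case the reassignment changes the weight from $6$ to $4$ on the four affected vertices, so the new weight is $\omega(f)-2$, not $\omega(f)-4$ --- still a contradiction.
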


\begin{proof}
Suppose, to the contrary, that there exists some $j_0\in \{0,1,\dots,n-1\}$ such that $a_{j_0}\le1$.
Assume that $a_{j_0}=0$. Clearly, $f((0,j_0-1))=f((1,j_0-1))=\{1,2,3\}$.
Then the function $g:V(P_2\square P_n)\rightarrow\mathcal{P}(\{1,2,3\})$ defined by
$g((0,j_0-1))=g((0,j_0))=g((1,j_0-1))=g((1,j_0))=\{1\}$ and $g((i,j))=f((i,j))$ otherwise, is a T$3$RDF on $P_2\square P_n$ with weight $\omega(g)=\omega(f)-2$, a contradiction.

Assume, next, that $a_{j_0}=1$.
Suppose that $|f((0,j_0))|=1$ and $|f((1,j_0))|=0$. Then $\{1,2,3\}\backslash f((0,j_0))\subseteq f((1,j_0-1))$,
implying that $|f((1,j_0-1))|\in\{2,3\}$.
One can check that the function $g:V(P_2\square P_n)\rightarrow\mathcal{P}(\{1,2,3\})$ defined by
$g((1,j_0-1))=g((1,j_0))=\{1\}$ and $g((i,j))=f((i,j))$ otherwise, is a T$3$RDF on $P_2\square P_n$.
Thus, if $|f((1,j_0-1))|=2$, then $g$ is also a $\gamma_{tr3}(P_2\square P_n)$-function, a contradiction to the choice of $f$;
and if $|f((1,j_0-1))|=3$, then $g$ is a T$3$RDF on $P_2\square P_n$ with weight $\omega(f)-1$, a contradiction.
Suppose now that $|f((0,j_0))|=0$ and $|f((1,j_0))|=1$.
Clearly, $f((0,j_0-1))=\{1,2,3\}$.
If $|f((1,j_0-1))|=0$, then the function $g:V(P_2\square P_n)\rightarrow\mathcal{P}(\{1,2,3\})$ defined by
$g((0,j_0-1))=g((0,j_0))=g((1,j_0-1))=\{1\}$ and $g((i,j))=f((i,j))$ otherwise, is a T$3$RDF on $P_2\square P_n$ with weight $\omega(f)$
and so $g$ is also a $\gamma_{tr3}(P_2\square P_n)$-function, a contradiction to the choice of $f$.
If $|f((1,j_0-1))|\ge1$, then the function $g:V(P_2\square P_n)\rightarrow\mathcal{P}(\{1,2,3\})$ defined by
$g((0,j_0-1))=g((0,j_0))=\{1\}$ and $g((i,j))=f((i,j))$ otherwise, is a T$3$RDF on $P_2\square P_n$ with weight $\omega(f)-1$, a contradiction.
Consequently, we have $a_j\ge2$ for each $j\in \{0,1,\dots,n-1\}$.
\end{proof}

Next we shall give the exact value of $\gamma_{tr3}(P_2\square P_n)$.

\begin{proposition}\label{p3}
For $n\ge2$,
$$\gamma_{tr3}(P_2\square P_n)=2n.$$
\end{proposition}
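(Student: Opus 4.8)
The plan is to prove $\gamma_{tr3}(P_2\square P_n)=2n$ by establishing the lower bound $\gamma_{tr3}(P_2\square P_n)\ge 2n$ via Lemma~\ref{l3} and the matching upper bound by exhibiting an explicit T$3$RDF of weight $2n$. For the lower bound, let $f$ be a $\gamma_{tr3}(P_2\square P_n)$-function for which the number of vertices assigned $\emptyset$ is minimum, and set $a_j=|f((0,j))|+|f((1,j))|$ for $j\in\{0,1,\dots,n-1\}$. By Lemma~\ref{l3}, $a_j\ge 2$ for every such $j$, and hence
\[
\gamma_{tr3}(P_2\square P_n)=\omega(f)=\sum_{j=0}^{n-1}a_j\ge 2n.
\]
This is the easy half once Lemma~\ref{l3} is in hand.

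For the upper bound, I would write down a concrete assignment and check that it is a T$3$RDF of weight exactly $2n$. The natural choice, mirroring the construction used in Proposition~\ref{p2}, is to put two labels in each column. One clean option: set $g((0,j))=\{1\}$ and $g((1,j))=\{2\}$ for all $j$ in which case $\omega(g)=2n$; every vertex receives a nonempty set, so the ``no isolated vertex in the induced subdigraph'' condition is trivially satisfied (in fact each vertex $(i,j)$ with $j\le n-2$ has the out-neighbour $(i,j+1)$ which is also nonempty, and the empty rainbow-domination condition is vacuous because no vertex is assigned $\emptyset$). Thus $g$ is a T$3$RDF and $\gamma_{tr3}(P_2\square P_n)\le \omega(g)=2n$. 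Combining the two bounds gives $\gamma_{tr3}(P_2\square P_n)=2n$.

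The only genuinely delicate point has already been dealt with in Lemma~\ref{l3}; the present proof is then a short two-line argument in each direction. If one wanted a more economical-looking upper-bound certificate with some vertices empty (to make the bound look less trivial), one could instead assign $\{1,2,3\}$ and $\emptyset$ alternately along each row while keeping the induced subdigraph isolate-free, but this requires more care with the in-neighbour condition at the column $j=0$ (where $d^-((0,0))=0$ forces $f((0,0))\ne\emptyset$) and near the right end; the all-nonempty assignment above avoids all such bookkeeping, so I would present that one. In short, I expect no real obstacle: the proof is ``lower bound from Lemma~\ref{l3}, upper bound from the two-colour assignment,'' and I would write it in exactly that order.
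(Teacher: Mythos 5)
Your proposal is correct and matches the paper's argument: the lower bound is obtained exactly as in the paper from Lemma~\ref{l3}, and your explicit all-nonempty assignment of weight $2n$ is just an unpacking of the trivial upper bound $\gamma_{trk}(D)\le |V(D)|$ that the paper invokes via Proposition~\ref{p1}. No gaps.
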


\begin{proof} Let $f$ be a $\gamma_{tr3}(P_2\square P_n)$-function such that the number of vertices assigned $\emptyset$ under $f$ is minimum and let $a_j=|f((0,j))|+|f((1,j))|$ for each $j\in \{0,1,\dots,n-1\}$.
We conclude from Lemma \ref{l3} that for each $j\in \{0,1,\dots,n-1\}$, $a_j\ge2.$
This implies that $\gamma_{tr3}(P_2\square P_n)=\omega(f)\ge2n.$
On the other hand, it follows from Proposition \ref{p1} that $\gamma_{tr3}(P_2\square P_n)\le2n.$
As a result, we obtain $\gamma_{tr3}(P_2\square P_n)=2n.$
\end{proof}

For any integer $n\ge3$, we next determine the value of $\gamma_{tr3}(P_3\square P_n)$.
For this purpose, we need some lemmas as follows.

\begin{lemma}\label{l4}
Let $n\ge3$ be an integer and let $f$ be a $\gamma_{tr3}(P_3\square P_n)$-function such that the number of vertices assigned $\emptyset$ under $f$ is minimum.
Then for each $j\in \{0,1,\dots,n-1\}$, $$|f((2,j))|\in\{0,1\}.$$
\end{lemma}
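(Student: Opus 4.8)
The plan is to argue by contradiction, exactly in the style of Lemmas \ref{l1}--\ref{l3}: suppose some vertex $(2,j_0)$ has $|f((2,j_0))|\ge 2$, and exhibit a T$3$RDF of weight at most $\omega(f)$ that assigns $\emptyset$ to strictly more vertices (or of strictly smaller weight, which is even better), contradicting either the minimality of $\omega(f)=\gamma_{tr3}(P_3\square P_n)$ or the secondary minimality of the number of $\emptyset$-vertices. The key structural fact I would use is that in $P_3\square P_n$ the vertex $(2,j)$ has in-neighbors only among $(1,j)$ and $(2,j-1)$ (and out-neighbors only among $(1,j)$ and $(2,j+1)$), so $(2,j)$ is a "corner-type" vertex of low in-degree; giving it a large label set is wasteful because the only vertex it can help rainbow-dominate downward is $(2,j+1)$, and the only vertex it totally-dominates is reached through $(1,j)$ or $(2,j\pm1)$.

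First I would fix $j_0$ with $|f((2,j_0))|\ge 2$ and consider how much of the weight at $(2,j_0)$ is actually needed. The vertex $(2,j_0)$ itself need not be rainbow-dominated by anyone (we may reduce its own set freely, only taking care of the total condition), and the only vertex whose in-neighborhood contains $(2,j_0)$ is $(2,j_0+1)$ (when $j_0\le n-2$). So the "useful" content of $f((2,j_0))$ for rainbow domination is entirely channeled into covering $(2,j_0+1)$ in case $f((2,j_0+1))=\emptyset$; but then $(1,j_0+1)$ is the only other in-neighbor of $(2,j_0+1)$, and $(1,j_0+1)$ typically carries weight as well. The plan is to case on the values $|f((2,j_0-1))|$, $|f((1,j_0))|$, $|f((2,j_0+1))|$, $|f((1,j_0+1))|$ and in each case redistribute: shrink $f((2,j_0))$ to a single label $\{1\}$ (or move a needed label onto $(1,j_0)$ or onto $(2,j_0+1)$), and, if necessary, put $\{1\}$ on $(1,j_0)$ to maintain the total (no-isolated-vertex-in-support) condition. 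Each such modification does not increase the weight, and in the subcases where it keeps the weight equal it strictly increases the number of $\emptyset$-vertices or keeps it the same while we instead get a genuine weight drop — either way a contradiction.

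The main obstacle, as in Lemma \ref{l3}, will be the bookkeeping of the total-domination constraint: after shrinking $f((2,j_0))$ we must ensure that every remaining non-$\emptyset$ vertex still has a non-$\emptyset$ neighbor in the (undirected underlying) sense required by the T$3$RDF definition, and that no vertex that was rainbow-dominated via $(2,j_0)$ loses coverage. Concretely, the delicate subcase is when $f((2,j_0+1))=\emptyset$ and $(2,j_0)$ was supplying a label missing from $f((1,j_0+1))$; there I would instead transfer that single missing label to $(1,j_0+1)$ (or to $(2,j_0+1)$ directly), paying at most what we saved at $(2,j_0)$, and then separately check $(2,j_0)$'s own support-neighbor status, adding $\{1\}$ to $(1,j_0)$ only if forced. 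Once all of the finitely many neighborhood-pattern cases are checked and each yields a T$3$RDF contradicting the choice of $f$, we conclude $|f((2,j))|\le 1$ for all $j$, and combined with $|f((2,j))|\ge 0$ trivially this gives $|f((2,j))|\in\{0,1\}$.
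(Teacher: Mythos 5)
Your overall strategy is the paper's: assume $|f((2,j_0))|\ge 2$, shrink $f((2,j_0))$ to $\{1\}$, repair the damage, and contradict either the minimality of $\omega(f)$ or the secondary minimality of the number of $\emptyset$-vertices. You also correctly isolate the key structural fact that the only vertex whose rainbow domination can depend on $(2,j_0)$ is $(2,j_0+1)$ (one small slip: $(1,j_0)$ is an in-neighbor, not an out-neighbor, of $(2,j_0)$, so $(2,j_0+1)$ is the \emph{only} out-neighbor). Because of this, the four-variable case analysis you propose is unnecessary: the total condition at $(2,j_0)$ is automatic (it stays in the support, and no other vertex leaves the support), so the only thing that can break is the coverage of $(2,j_0+1)$, and the paper's proof accordingly splits into just two cases according to whether $f((2,j_0+1))$ is empty.

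The genuine gap is in your handling of the delicate subcase $f((2,j_0+1))=\emptyset$. Your primary repair --- transferring the missing label(s) to $(1,j_0+1)$ --- does not yield a contradiction. For instance, if $f((2,j_0))=\{1,2\}$, $f((1,j_0+1))=\{3\}$ and $f((2,j_0+1))=\emptyset$, your modification gives $g((2,j_0))=\{1\}$, $g((1,j_0+1))=\{2,3\}$: the weight is unchanged \emph{and} the number of $\emptyset$-vertices is unchanged, so neither minimality condition is violated. The repair that works is the one you mention only parenthetically and which the paper uses: set $g((2,j_0))=g((2,j_0+1))=\{1\}$. Then $(2,j_0+1)$ is no longer empty (so it needs no rainbow domination), the total condition holds since $(2,j_0)$ and $(2,j_0+1)$ support each other, and one gets $\omega(g)=\omega(f)$ with strictly fewer $\emptyset$-vertices when $|f((2,j_0))|=2$, and $\omega(g)<\omega(f)$ when $|f((2,j_0))|=3$ --- a contradiction either way. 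Beyond committing to this repair, your argument also needs to be actually executed (the cases are described but not carried out), including the boundary vertex $(2,n-1)$, where the reduction to $\{1\}$ is free since that vertex has no out-neighbors at all.
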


\begin{proof} It is not hard to verify that $|f((2,n-1))|\in\{0,1\}$.
We now show that $|f((2,j))|\in\{0,1\}$ for each $j\in \{0,1,\dots,n-2\}$.
Suppose, to the contrary, that there exists some $j_0\in \{0,1,\dots,n-2\}$ such that $|f((2,j_0))|\ge2$.
If $|f((2,j_0+1))|\ge1$, then the function $g_1:V(P_3\square P_n)\rightarrow\mathcal{P}(\{1,2,3\})$ defined by
$g_1((2,j_0))=\{1\}$ and $g_1((i,j))=f((i,j))$ otherwise, is a T$3$RDF on $P_3\square P_n$ with weight $\omega(g_1)\le\gamma_{tr3}(P_3\square P_n)-1$, a contradiction.
Hence we may assume that $|f((2,j_0+1))|=0$.
Now define the function $g_2:V(P_3\square P_n)\rightarrow\mathcal{P}(\{1,2,3\})$ by
$g_2((2,j_0))=g_2((2,j_0+1))=\{1\}$ and $g_2((i,j))=f((i,j))$ otherwise.
If $|f((2,j_0))|=2$, then $g_2$ is a T$3$RDF on $P_3\square P_n$ with weight $\omega(g_2)=\gamma_{tr3}(P_3\square P_n)$ and hence $g_2$ is
also a $\gamma_{tr3}(P_3\square P_n)$-function, a contradiction to the
choice of $f$.
If $|f((2,j_0))|=3$, then $g_2$ is a T$3$RDF on $P_3\square P_n$ with weight $\omega(g_2)=\gamma_{tr3}(P_3\square P_n)-1$, a contradiction.
Consequently, we have $|f((2,j))|\in\{0,1\}$ for each $j\in \{0,1,\dots,n-2\}$.
\end{proof}

\begin{lemma}\label{l5}
Let $n\ge3$ be an integer and let $f$ be a $\gamma_{tr3}(P_3\square P_n)$-function such that the number of vertices assigned $\emptyset$ under $f$ is minimum. If there exists $j\in \{0,1,\dots,n-1\}$ such that $|f((1,j))|=1$, then $|f((2,j))|=1$.
\end{lemma}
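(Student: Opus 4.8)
The plan is to argue by contradiction, using only the rainbow condition at the single vertex $(2,j)$ together with Lemma \ref{l4}; in particular, no modification of $f$ will be needed, so unlike the preceding lemmas this one does not really exploit the minimality in the choice of $f$. First I would record the in-neighbourhood of $(2,j)$ in $P_3\square P_n$: by the definition of the Cartesian product, the only arc of $P_3$ entering vertex $2$ is $(1,2)$ and the only arc of $P_n$ entering $j$ is $(j-1,j)$, so $N^-((2,j))=\{(1,j)\}$ when $j=0$ and $N^-((2,j))=\{(1,j),(2,j-1)\}$ when $j\ge1$.

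By Lemma \ref{l4} we already know $|f((2,j))|\in\{0,1\}$, so it suffices to rule out $f((2,j))=\emptyset$. Suppose to the contrary that $|f((1,j))|=1$ but $f((2,j))=\emptyset$. Since $f$ is a T$3$RDF, the vertex $(2,j)$ with empty label must satisfy $\bigcup_{u\in N^-((2,j))}f(u)=\{1,2,3\}$. If $j=0$ this union is just $f((1,0))$, which has cardinality $1$, a contradiction. If $j\ge1$ the union is $f((1,j))\cup f((2,j-1))$; applying Lemma \ref{l4} to the index $j-1$ gives $|f((2,j-1))|\le1$, hence $|f((1,j))\cup f((2,j-1))|\le 1+1=2<3$, again a contradiction.

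Therefore $f((2,j))\neq\emptyset$, and combining this with Lemma \ref{l4} yields $|f((2,j))|=1$, as claimed. The only point requiring care is correctly identifying $N^-((2,j))$ from the Cartesian product structure and treating the boundary index $j=0$ (where $(2,j)$ has a single in-neighbour) separately; once that is in place the cardinality count closes the argument immediately.
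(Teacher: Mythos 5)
Your proof is correct and follows essentially the same route as the paper: assume $f((2,j))=\emptyset$, invoke the rainbow condition at $(2,j)$, and use Lemma \ref{l4} on $(2,j-1)$ to get a cardinality contradiction, then conclude via Lemma \ref{l4} again that $|f((2,j))|=1$. Your explicit treatment of the boundary case $j=0$ is a small tidiness improvement over the paper's wording, but the argument is the same.
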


\begin{proof}
Suppose that there exists $j\in \{0,1,\dots,n-1\}$ such that $|f((1,j))|=1$.
If $|f((2,j))|=0$, then $\{1,2,3\}\backslash f((1,j))\subseteq f((2,j-1))$ and so $|f((2,j-1))|\ge2$, a contradiction to Lemma \ref{l4}.
Thus $|f((2,j))|\ge1$.
Moreover, since $|f((2,j))|\in\{0,1\}$ by Lemma \ref{l4}, we have $|f((2,j))|=1$.
\end{proof}

\begin{lemma}\label{l6}
Let $n\ge3$ be an integer and let $f$ be a $\gamma_{tr3}(P_3\square P_n)$-function such that the number of vertices assigned $\emptyset$ under $f$ is minimum.
Then for $i\in \{0,1\}$ and $j\in \{0,1,\dots,n-1\}$,
$$|f((i,j))|\le2.$$
\end{lemma}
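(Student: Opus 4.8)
The key observation is that, because $k=3$, the inequality $|f((i,j))|\le 3$ is automatic, so to prove the lemma I only need to exclude the value $|f((i,j))|=3$ on rows $0$ and $1$. I would therefore argue by contradiction, assuming $f((i_0,j_0))=\{1,2,3\}$ for some $i_0\in\{0,1\}$ and $j_0\in\{0,1,\dots,n-1\}$, and then construct a T$3$RDF $g$ that contradicts the extremal choice of $f$ (first its minimum weight, and if necessary its minimum number of $\emptyset$-vertices). The feature of rows $0$ and $1$ that I would exploit is that the vertical out-neighbour $v_1=(i_0+1,j_0)$ always exists there (since $i_0+1\le 2$); besides $v_1$, the vertex $(i_0,j_0)$ has at most the horizontal out-neighbour $v_2=(i_0,j_0+1)$, present only when $j_0\le n-2$. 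Hence $(i_0,j_0)$ has at most two out-neighbours.

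The construction I would use is a single uniform move. Let $e\in\{0,1,2\}$ denote the number of out-neighbours of $(i_0,j_0)$ that $f$ assigns $\emptyset$. Define $g$ by setting $g((i_0,j_0))=\{1\}$, assigning $\{1\}$ to every out-neighbour of $(i_0,j_0)$ that is empty under $f$, and keeping $g=f$ on all other vertices. The weight bookkeeping is then $\omega(g)=\omega(f)-2+e$: the relabelling of $(i_0,j_0)$ saves $2$, while filling the $e$ empty out-neighbours costs $e$. Since $e\le 2$, this gives $\omega(g)\le\omega(f)$, with strict inequality whenever $e\le 1$ and equality only when $e=2$; and in the case $e=2$ the number of vertices assigned $\emptyset$ drops by $2$. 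Thus, provided $g$ is a genuine T$3$RDF, either $\omega(g)<\omega(f)$ contradicts the minimality of the weight, or $e=2$ and $g$ is another $\gamma_{tr3}(P_3\square P_n)$-function with strictly fewer $\emptyset$-vertices, contradicting the choice of $f$.

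The step I expect to be the main obstacle is verifying that $g$ is indeed a T$3$RDF, since shrinking $(i_0,j_0)$ from $\{1,2,3\}$ to a single colour is potentially dangerous. The idea that makes this routine is to fill \emph{all} empty out-neighbours of $(i_0,j_0)$ at once: after the move no out-neighbour of $(i_0,j_0)$ is empty, so the reduced label $g((i_0,j_0))=\{1\}$ carries no rainbow obligation whatsoever. For the remaining checks I would note that changing $f$ at a vertex only affects the rainbow condition at its out-neighbours; the out-neighbours of $(i_0,j_0)$ are now all nonempty, while the out-neighbours of the newly filled vertices merely receive additional colours, so every vertex still assigned $\emptyset$ under $g$ retains a full rainbow $\{1,2,3\}$ on its in-neighbourhood. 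For the total condition, every newly nonempty vertex has the nonempty in-neighbour $(i_0,j_0)$, and $(i_0,j_0)$ itself has the nonempty out-neighbour $v_1$, so the subdigraph induced by the nonempty vertices still has no isolated vertex. This confirms that $g$ is a T$3$RDF and completes the contradiction. I note that this argument is self-contained and does not require Lemmas \ref{l4} and \ref{l5}; it uses only that rows $0$ and $1$ always possess the vertical out-neighbour $v_1$, which is precisely what fails on row $2$ and forces the separate, stronger treatment there.
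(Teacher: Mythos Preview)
Your argument is correct and follows the same core idea as the paper's proof: assume $f((i_0,j_0))=\{1,2,3\}$, shrink it to $\{1\}$, and compensate at the out-neighbours to obtain a T$3$RDF $g$ that contradicts the extremal choice of $f$. The paper carries this out by splitting into cases (first $j_0=n-1$ versus $j_0\le n-2$, and in the latter case whether at least one or neither out-neighbour is nonempty), whereas you handle all situations at once via the single parameter $e$, the number of empty out-neighbours, with $\omega(g)=\omega(f)-2+e$. Your packaging is cleaner, and your observation that the vertical out-neighbour $(i_0+1,j_0)$ always exists for $i_0\in\{0,1\}$ is exactly the structural fact the paper exploits as well. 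One small point worth making explicit in your total-condition check is that every \emph{previously} nonempty vertex other than $(i_0,j_0)$ remains non-isolated simply because the set of nonempty vertices only grows under $g$; you implicitly rely on this, and it is immediate, but stating it would make the verification complete.
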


\begin{proof} If $|f((0,n-1))|=3$, then the function $g:V(P_3\square P_n)\rightarrow\mathcal{P}(\{1,2,3\})$ defined by
$g((0,n-1))=\{1\}$, $g((1,n-1))=f((1,n-1))\cup \{1\}$ and $g((i,j))=f((i,j))$ otherwise, is a T$3$RDF on $P_3\square P_n$ with weight $\omega(g)\le\gamma_{tr3}(P_3\square P_n)-1$, a contradiction.
Thus $|f((0,n-1))|\le2$. Similarly, we have $|f((1,n-1))|\le2$.

We now claim that $|f((i,j))|\le2$ for $i\in \{0,1\}$ and $j\in \{0,1,\dots,n-2\}$.
Suppose, to the contrary, that there exist $i_0\in \{0,1\}$ and $j_0\in \{0,1,\dots,n-2\}$ such that $|f((i_0,j_0))|=3$.
Assume that $|f((i_0,j_0+1))|\ge1$ or $|f((i_0+1,j_0))|\ge1$. Without loss of generality, we may assume that $|f((i_0,j_0+1))|\ge1$.
Then the function $g:V(P_3\square P_n)\rightarrow\mathcal{P}(\{1,2,3\})$ defined by
$g((i_0,j_0))=\{1\}$, $g((i_0+1,j_0))=f((i_0+1,j_0))\cup\{1\}$ and $g((i,j))=f((i,j))$ otherwise, is a T$3$RDF on $P_3\square P_n$ with weight $\omega(g)\le\omega(f)-1$, a contradiction.
Assume now that $|f((i_0,j_0+1))|=|f((i_0+1,j_0))|=0$.
Then the function $g:V(P_3\square P_n)\rightarrow\mathcal{P}(\{1,2,3\})$ defined by
$g((i_0,j_0))=g((i_0+1,j_0))=g((i_0,j_0+1))=\{1\}$ and $g((i,j))=f((i,j))$ otherwise, is a T$3$RDF on $P_3\square P_n$ with weight $\omega(g)=\omega(f)$, implying that $g$ is also a $\gamma_{tr3}(P_3\square P_n)$-function, a contradiction to the
choice of $f$.
Therefore, we have $|f((i,j))|\le2$ for $i\in \{0,1\}$ and $j\in \{0,1,\dots,n-2\}$.
\end{proof}

\begin{lemma}\label{l7}
Let $n\ge3$ be an integer and let $f$ be a $\gamma_{tr3}(P_3\square P_n)$-function such that the number of vertices assigned $\emptyset$ under $f$ is minimum. If there exists $j\in \{0,1,\dots,n-2\}$ such that $|f((1,j))|=2$, then $|f((1,j+1))|=|f((2,j))|=0$.
\end{lemma}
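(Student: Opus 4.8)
The plan is to argue by contradiction, each time modifying $f$ into a new T$3$RDF $g$ that is strictly lighter (contradicting $\omega(f)=\gamma_{tr3}(P_3\square P_n)$) or has the same weight but fewer vertices mapped to $\emptyset$ (contradicting the choice of $f$); the function $g$ will always differ from $f$ on only a few vertices near column $j$. I will freely use the arc structure of $P_3\square P_n$: the in-neighbors of $(i,j)$ are $(i-1,j)$ (if $i\ge1$) and $(i,j-1)$ (if $j\ge1$), and the only out-neighbor of $(2,j)$ in the same copy of $P_3$ is $(2,j+1)$. One preliminary remark will be convenient: by Lemma~\ref{l6}, $|f((0,j'))|\le2$ for every $j'$, and hence $f((0,j'))\ne\emptyset$ for all $j'$ (for $j'=0$ there is no in-neighbor, and for $j'\ge1$ the sole in-neighbor $(0,j'-1)$ would otherwise need weight $3$). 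In particular, every vertex $(1,j')$ has the non-empty in-neighbor $(0,j')$, so $(1,j')$ is never isolated in the subdigraph induced by the vertices with non-empty label.

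Write $f((1,j))=\{x,y\}$ and let $z$ be the third colour. First I would show $|f((1,j+1))|=0$. Suppose $f((1,j+1))\ne\emptyset$. If $f((2,j))\ne\emptyset$, then the out-neighbors of $(1,j)$, namely $(2,j)$ and $(1,j+1)$, are both non-empty, so replacing $f((1,j))$ by $\{x\}$ changes no rainbow requirement and leaves the set of non-empty vertices unchanged, giving a T$3$RDF of weight $\omega(f)-1$ -- a contradiction. If $f((2,j))=\emptyset$, then the in-neighbors of $(2,j)$ cover $\{1,2,3\}$; since $|f((1,j))|=2$ this forces $j\ge1$ and $z\in f((2,j-1))$, so $f((2,j-1))=\{z\}$ by Lemma~\ref{l4}. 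Then $g$ with $g((1,j))=\{x\}$, $g((2,j))=\{y\}$ and $g=f$ elsewhere has the same weight as $f$ but one fewer $\emptyset$-vertex; it is a T$3$RDF because the only affected possibly-empty vertex, $(2,j+1)$, was dominated in $f$ by $f((1,j+1))=\{1,2,3\}$ (which holds since $f((2,j))=\emptyset$) and remains so. Again a contradiction.

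Next, assuming $f((1,j+1))=\emptyset$, I would show $|f((2,j))|=0$. Suppose not; by Lemma~\ref{l4}, $f((2,j))=\{d\}$ for a single colour $d$. From $f((1,j+1))=\emptyset$ we get that its in-neighbors $(0,j+1),(1,j)$ cover $\{1,2,3\}$, so $z\in f((0,j+1))$; and the in-neighbors of $(2,j+1)$ are $(1,j+1)=\emptyset$ and $(2,j)=\{d\}$, so $(2,j+1)$ must be non-empty (hence $|f((2,j+1))|=1$ by Lemma~\ref{l4}). If $|f((0,j+1))|=2$, then $\{1,2,3\}\setminus f((0,j+1))$ is a single colour, and replacing $f((1,j))$ by that colour keeps $(1,j+1)$ dominated, leaves the non-empty set unchanged, and lowers the weight by $1$ -- a contradiction. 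This leaves the tight case $f((0,j+1))=\{z\}$, which I would finish by a finite case analysis on the value of $f((2,j-1))$ (observing that $f((2,j))$ is never needed for the rainbow condition, its only out-neighbor $(2,j+1)$ being non-empty, so it serves the total condition alone) and on whether $(2,j-1),(2,j+1),(2,j+2)$ are empty; in each case one redistributes colours among $(1,j),(0,j+1),(2,j),(1,j+1)$ to build a lighter T$3$RDF or one with fewer $\emptyset$-vertices, using the forced non-emptiness of $(0,j),(0,j+1)$ to keep $(1,j)$ and its neighbors from becoming isolated.

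The main obstacle is precisely this tight subcase $f((0,j+1))=\{z\}$: there the weight around column $j$ has no slack, and emptying $(2,j)$ is in direct tension with shrinking $(1,j)$ -- dominating $(2,j)$ once it is empty requires $(1,j)$ to retain both of its colours, so the two simplifications cannot be applied together. Resolving it requires combining Lemma~\ref{l4} (applied to $(2,j-1)$ and $(2,j+1)$) with the observation that the top-row vertices are forced non-empty, and is unavoidably a bookkeeping-heavy case check rather than a single clean move.
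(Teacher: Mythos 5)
Your first half (showing $|f((1,j+1))|=0$) is correct and is essentially the paper's argument: if both out-neighbors $(2,j)$ and $(1,j+1)$ of $(1,j)$ are non-empty you shrink $f((1,j))$ to a singleton and save weight, and in the remaining sub-case you redistribute the two colours of $(1,j)$ onto $(1,j)$ and $(2,j)$ to keep the weight while reducing the number of $\emptyset$-vertices, contradicting the choice of $f$. (Minor quibble: in that sub-case your claim that $(2,j+1)$ ``was dominated by $f((1,j+1))=\{1,2,3\}$'' would actually contradict Lemma~\ref{l6}, so $(2,j+1)$ is simply non-empty; either way your modification is valid.)

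The second half, however, has a genuine gap: you never prove $|f((2,j))|=0$. You reduce to the case $f((1,j+1))=\emptyset$, $f((2,j))=\{d\}$, $f((0,j+1))=\{z\}$, declare it ``the main obstacle,'' and only promise an unexecuted case analysis on $f((2,j-1)),(2,j+1),(2,j+2)$. Moreover, your diagnosis that this tight case is ``unavoidably a bookkeeping-heavy case check rather than a single clean move'' is wrong, and it stems from trying to \emph{make} $(2,j)$ empty by hand. You do not need to: you only need any contradiction with the extremality of $f$. The paper's single clean move is to set $g((1,j))=g((1,j+1))=\{1\}$ and $g=f$ elsewhere. The only out-neighbors of $(1,j)$ are $(2,j)$ (non-empty, equal to $\{d\}$) and $(1,j+1)$ (now non-empty), so no rainbow condition is violated by shrinking $(1,j)$; the out-neighbors of $(1,j+1)$ only gain colours; and no vertex becomes isolated since the set of non-empty vertices only grows. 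Thus $g$ is a T$3$RDF with $\omega(g)=\omega(f)$ but with one fewer vertex assigned $\emptyset$, contradicting the choice of $f$. This one observation closes exactly the case you left open.
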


\begin{proof} Suppose that there exists $j_0\in \{0,1,\dots,n-2\}$ such that $|f((1,j_0))|=2$.
Assume that $|f((1,j_0+1))|\ge1$ and $|f((2,j_0))|\ge1$. Then the function $g:V(P_3\square P_n)\rightarrow\mathcal{P}(\{1,2,3\})$ defined by
$g((1,j_0))=\{1\}$ and $g((i,j))=f((i,j))$ otherwise, is a T$3$RDF on $P_3\square P_n$ with weight $\omega(g)=\omega(f)-1$, a contradiction.
Assume that exactly one of $|f((1,j_0+1))|$ and $|f((2,j_0))|$ is equal to $0$.
Without loss of generality, assume that $|f((1,j_0+1))|=0$ and $|f((2,j_0))|\ge1$.
Then the function $g:V(P_3\square P_n)\rightarrow\mathcal{P}(\{1,2,3\})$ defined by
$g((1,j_0))=g((1,j_0+1))=\{1\}$ and $g((i,j))=f((i,j))$ otherwise, is a T$3$RDF on $P_3\square P_n$ with weight $\omega(g)=\omega(f)$ and so
$g$ is also a $\gamma_{tr3}(P_3\square P_n)$-function, a contradiction to the choice of $f$.
Therefore, we have $|f((1,j_0+1))|=|f((2,j_0))|=0$, which completes our proof.
\end{proof}

\begin{lemma}\label{l8}
Let $n\ge3$ be an integer and let $f$ be a $\gamma_{tr3}(P_3\square P_n)$-function such that the number of vertices assigned $\emptyset$ under $f$ is minimum. Then $|f((i,0))|=|f((0,j))|=1$ for each $i\in \{0,1,2\}$ and $j\in \{1,2,\dots,n-1\}$.
\end{lemma}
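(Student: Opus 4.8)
The plan is to establish the nonemptiness of all the listed vertices first, and then to rule out $|f(v)|\ge 2$ for each of them by replacing $f$ with a competing $\gamma_{tr3}(P_3\square P_n)$-function of smaller weight, or of the same weight but with fewer vertices assigned $\emptyset$.

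For the nonemptiness, note that $(0,0)$ is a source, so $f((0,0))\ne\emptyset$. Since $(1,0)$ has $(0,0)$ as its only in-neighbour, $f((1,0))=\emptyset$ would force $f((0,0))=\{1,2,3\}$, contradicting Lemma~\ref{l6}; hence $f((1,0))\ne\emptyset$. The same reasoning applied to $(2,0)$ (only in-neighbour $(1,0)$) and then repeatedly along the first row (where $(0,j)$ has only in-neighbour $(0,j-1)$) gives $f((2,0))\ne\emptyset$ and $f((0,j))\ne\emptyset$ for every $j\in\{1,\dots,n-1\}$. Together with Lemma~\ref{l4}, which gives $|f((2,0))|\le 1$, this already yields $|f((2,0))|=1$.

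Now fix $v\in\{(0,0),(1,0)\}\cup\{(0,j):1\le j\le n-1\}$. By Lemma~\ref{l6} we have $|f(v)|\le 2$, and by the previous paragraph $|f(v)|\ge 1$, so it suffices to rule out $|f(v)|=2$. The key observation is that $v$ has at most one out-neighbour assigned $\emptyset$: the unique out-neighbour of $v$ that can possibly be empty, which I call $w$, is $(1,j)$ when $v=(0,j)$ with $1\le j\le n-1$, is $(1,1)$ when $v=(1,0)$, and does not exist when $v=(0,0)$ --- because every other out-neighbour of $v$ lies in the first row or first column and is therefore nonempty by the previous paragraph. If $v$ has no empty out-neighbour, I would shrink $f(v)$ to a singleton; the set of nonempty vertices is unchanged, so the total condition still holds, and no empty vertex has $v$ among its in-neighbours, so the rainbow condition still holds, yielding a T$3$RDF of weight $\omega(f)-1$, a contradiction. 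If $f(w)=\emptyset$, I would shrink $f(v)$ to a singleton and simultaneously assign a singleton to $w$. The weight is unchanged, and the total condition holds since $w$, now nonempty, has the nonempty in-neighbour $v$. Moreover, since $f(w)=\emptyset$, each out-neighbour of $w$ --- namely $(2,j)$, and $(1,j+1)$ if it exists, where $w=(1,j)$ --- would, if empty, force its remaining in-neighbour (a vertex with first coordinate $0$ or $2$) to receive all of $\{1,2,3\}$, contradicting Lemma~\ref{l6} or Lemma~\ref{l4}; hence every out-neighbour of $w$ is already nonempty, so the only vertex whose in-neighbourhood is altered by the modification is $w$ itself, which becomes nonempty. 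This produces a $\gamma_{tr3}(P_3\square P_n)$-function with strictly fewer vertices assigned $\emptyset$, contradicting the choice of $f$. Hence $|f(v)|=1$ in all cases, as claimed.

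The main obstacle is the bookkeeping in the ``shift'' step: one must be sure that lowering the number of colours at $v$ strands no surviving $\emptyset$ vertex. This amounts precisely to the two facts that the only possibly-empty out-neighbour of $v$ is $w$, and that $w$ --- once it is known to be empty --- has no empty out-neighbour; both are obtained by feeding the grid structure of $P_3\square P_n$ into Lemmas~\ref{l4} and~\ref{l6}.
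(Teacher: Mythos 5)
Your proof is correct and follows essentially the same strategy as the paper's: nonemptiness comes from the unique-in-neighbour structure of the first row and column together with Lemmas \ref{l4} and \ref{l6}, and $|f(v)|=2$ is excluded either by shrinking $f(v)$ (weight drops by one) or, when the lone possibly-empty out-neighbour $w$ is empty, by redistributing one colour to $w$ (same weight, fewer vertices assigned $\emptyset$), contradicting the choice of $f$. Your uniform treatment of all boundary vertices, and the direct use of Lemma \ref{l4} for $(2,0)$, are only minor reorganizations of the paper's argument.
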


\begin{proof} By Lemma \ref{l6}, $|f((0,0))|\le2$. If $|f((0,0))|=2$, then $|f((1,0))|\ge1$ and $|f((0,1))|\ge1$ and so the function $f':V(P_3\square P_n)\rightarrow\mathcal{P}(\{1,2,3\})$ defined by
$f'((0,0))=\{1\}$ and $f'((i,j))=f((i,j))$ otherwise, is a T$3$RDF on $P_3\square P_n$ with weight $\omega(f')=\omega(f)-1$, a contradiction.
Thus $|f((0,0))|\le1$.
Moreover, since $d^-((0,0))=0$, we obtain $|f((0,0))|=1$.

We now claim that $|f((0,j))|=1$ for each $j\in \{1,2,\dots,n-1\}$.
Note that $|f((0,j))|\le2$ for $j\in \{1,2,\dots,n-1\}$ by Lemma \ref{l6}.
For the sake of contradiction, we may assume that there exists $j_0\in \{1,2,\dots,n-1\}$ such that $|f((0,j_0))|\in\{0,2\}$.
If $|f((0,j_0))|=0$, then clearly $f((0,j_0-1))=\{1,2,3\}$, a contradiction to Lemma \ref{l6}.
Assume, next, that $|f((0,j_0))|=2$.
Obviously $|f((0,j_0+1))|\ge1$.
If $|f((1,j_0))|\ge1$, then the function $g:V(P_3\square P_n)\rightarrow\mathcal{P}(\{1,2,3\})$ defined by
$g((0,j_0))=\{1\}$ and $g((i,j))=f((i,j))$ otherwise, is a T$3$RDF on $P_3\square P_n$ with weight $\omega(g)=\omega(f)-1$, a contradiction.
If $|f((1,j_0))|=0$, then the function $h:V(P_3\square P_n)\rightarrow\mathcal{P}(\{1,2,3\})$ defined by
$h((0,j_0))=h((1,j_0))=\{1\}$ and $h((i,j))=f((i,j))$ otherwise, is a T$3$RDF on $P_3\square P_n$ with weight $\omega(h)=\omega(f)$ and so $h$
is also a $\gamma_{tr3}(P_3\square P_n)$-function, a contradiction to the choice of $f$.
Consequently, $|f((0,j))|=1$ for each $j\in \{1,2,\dots,n-1\}$.
Similarly, we have $|f((i,0))|=1$ for each $i\in \{1,2\}$.
\end{proof}

\begin{lemma}\label{l9}
Let $n\ge3$ be an integer and let $f$ be a $\gamma_{tr3}(P_3\square P_n)$-function such that the number of vertices assigned $\emptyset$ under $f$ is minimum. If there exists $j\in \{0,1,\dots,n-2\}$ such that $|f((1,j))|=|f((2,j))|=1$, then $|f((1,j+1))|+|f((2,j+1))|=2$.
\end{lemma}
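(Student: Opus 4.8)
The plan is to argue by contradiction: assume $|f((1,j))|=|f((2,j))|=1$ for some $j\in\{0,1,\dots,n-2\}$ but $|f((1,j+1))|+|f((2,j+1))|\neq 2$, and produce a T$3$RDF with either smaller weight or the same weight but fewer vertices assigned $\emptyset$, contradicting the minimality of $f$. First I would record the structural facts already available: by Lemma \ref{l5}, $|f((1,j))|=1$ forces exactly this configuration rather than something worse; by Lemma \ref{l4}, $|f((2,j))|,|f((2,j+1))|\in\{0,1\}$; and by Lemma \ref{l6}, $|f((1,j+1))|\le 2$. Hence $|f((1,j+1))|+|f((2,j+1))|\in\{0,1,2,3\}$, and I must rule out the values $0$, $1$, and $3$ (with $3$ meaning $|f((1,j+1))|=2$ and $|f((2,j+1))|=1$, which is already excluded outright by Lemma \ref{l7} applied at index $j+1$ since $|f((1,j+1))|=2$ would force $|f((2,j+1))|=0$).

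The heart of the argument is the two remaining cases. If $|f((1,j+1))|+|f((2,j+1))|=0$, then the vertex $(1,j+1)$ has $f((1,j+1))=\emptyset$, so its in-neighbors — namely $(0,j+1)$ and $(1,j)$ — must together cover $\{1,2,3\}$; since $|f((1,j))|=1$ and $|f((0,j+1))|\le 2$ by Lemma \ref{l6}, this is tight only if $|f((0,j+1))|=2$, but then the vertex $(0,j)$ is a support-type vertex whose only out-neighbors in the fiber are $(0,j+1)$ and $(1,j)$; I would exploit that $(2,j+1)$ also has $f((2,j+1))=\emptyset$, so its in-neighbors $(1,j+1)$ and $(2,j)$ cover $\{1,2,3\}$, forcing $|f((2,j))|\ge 3-|f((1,j+1))|=3$, contradicting $|f((2,j))|=1$. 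Thus the case $\sum=0$ is impossible. If instead $|f((1,j+1))|+|f((2,j+1))|=1$, then exactly one of these two vertices is empty, and whichever it is, its in-neighbor coverage condition together with $|f((1,j))|=|f((2,j))|=1$ and Lemma \ref{l4}/\ref{l6} is again too tight to satisfy; more precisely, if $|f((2,j+1))|=0$ and $|f((1,j+1))|=1$, then $(2,j+1)$ needs $f((1,j+1))\cup f((2,j))=\{1,2,3\}$, impossible since the union has size at most $2$, and symmetrically if $|f((1,j+1))|=0$ and $|f((2,j+1))|=1$, then $(1,j+1)$ needs $f((0,j+1))\cup f((1,j))=\{1,2,3\}$, which forces $|f((0,j+1))|=2$, whereupon I redirect the empty label to a modification — set $g((1,j+1))=\{1\}$ and absorb the excess from $(0,j+1)$, say $g((0,j+1))=\{1\}$, checking via Lemma \ref{l8} (which pins $|f((0,\cdot))|=1$ along the top row, already a contradiction with $|f((0,j+1))|=2$). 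In fact this last observation short-circuits several subcases: by Lemma \ref{l8}, $|f((0,j))|=1$ for all $j\ge 1$ and $|f((0,0))|=1$, so whenever my case analysis forces $|f((0,j+1))|=2$ I am done immediately.

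The main obstacle I anticipate is bookkeeping the boundary case $j+1=n-1$ separately, since Lemmas \ref{l4}, \ref{l6} have the last column singled out and the out-neighbor structure degenerates there (no column $j+2$); I would handle it by noting that for $j+1=n-1$ the vertices $(1,n-1)$ and $(2,n-1)$ have in-neighbors only in column $n-2$ and the same fiber, which if anything tightens the covering inequalities and makes the contradictions easier. A second, milder obstacle is making the weight-preserving reassignments genuinely decrease the number of $\emptyset$-vertices (to invoke the minimality clause rather than just weight-minimality); in each weight-neutral modification I would verify that the modified $g$ assigns nonempty sets to a strict superset of the previously nonempty vertices in columns $j, j+1$, so the $\emptyset$-count strictly drops. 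Assembling these cases yields $|f((1,j+1))|+|f((2,j+1))|=2$, as claimed.
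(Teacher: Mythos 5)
Your overall strategy is the paper's argument reorganized: the paper first shows $|f((1,j+1))|\in\{1,2\}$ (via Lemma \ref{l8} and the covering condition at $(1,j+1)$, then Lemma \ref{l6}) and analyzes each value, while you enumerate the possible sums $0,1,2,3$ and rule out $0$, $1$, $3$ using the same ingredients (Lemmas \ref{l4}, \ref{l6}, \ref{l7}, \ref{l8} and the in-neighbor covering conditions). Your exclusions of the sums $0$ and $1$ are correct and complete: in each of those cases one of $(1,j+1)$, $(2,j+1)$ is assigned $\emptyset$ and its two in-neighbors carry total weight at most $2$, or else $|f((0,j+1))|\ge 2$ contradicts Lemma \ref{l8}. (The aside about $(0,j)$ being a ``support-type vertex'' in the sum-$0$ case is not needed; your $(2,j+1)$ covering argument there already finishes that case.)

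There is one genuine gap: the exclusion of the sum $3$, i.e.\ $|f((1,j+1))|=2$ and $|f((2,j+1))|=1$, when $j+1=n-1$. Lemma \ref{l7} is stated only for indices in $\{0,1,\dots,n-2\}$, so it cannot be ``applied at index $j+1$'' in that boundary case, and your proposed fallback --- that at the boundary the covering inequalities only tighten --- is vacuous here: both $(1,n-1)$ and $(2,n-1)$ are assigned nonempty sets, so no rainbow-covering condition constrains either of them, and the configuration is locally a perfectly valid T$3$RDF. The only way to kill it is by minimality, which is exactly what the paper does: define $g$ by $g((1,n-1))=\{1\}$ and $g=f$ elsewhere; since $(2,n-1)$ is nonempty, $(1,n-1)$ is not isolated in the induced subdigraph and no vertex assigned $\emptyset$ loses coverage, so $g$ is a T$3$RDF of weight $\omega(f)-1$, a contradiction. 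With that one patch your proof is complete.
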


\begin{proof} Suppose that there exists $j_0\in \{0,1,\dots,n-2\}$ such that $|f((1,j_0))|=|f((2,j_0))|=1$.
Note that $|f((0,j_0+1))|=1$ by Lemma \ref{l8}. Thus $|f((1,j_0+1))|\ge1$ and hence it follows from Lemma \ref{l6} that $|f((1,j_0+1))|\in\{1,2\}$.

Assume now that $|f((1,j_0+1))|=1$. Recall that $|f((2,j_0))|=1$. Then $|f((2,j_0+1))|\ge1$.
Moreover, it follows from Lemma \ref{l4} that $|f((2,j_0+1))|\in\{0,1\}$, implying that $|f((2,j_0+1))|=1$.
As a result, we get $|f((1,j_0+1))|+|f((2,j_0+1))|=2$.

Assume that $|f((1,j_0+1))|=2$.
If $j_0+1<n-1$, then we conclude from Lemma \ref{l7} that $|f((2,j_0+1))|=0$ and so $|f((1,j_0+1))|+|f((2,j_0+1))|=2$.
Now let $j_0+1=n-1$.
If $|f((2,j_0+1))|\ne0$, then the function $g:V(P_3\square P_n)\rightarrow\mathcal{P}(\{1,2,3\})$ defined by
$g((1,j_0+1))=\{1\}$ and $g((i,j))=f((i,j))$ otherwise, is a T$3$RDF on $P_3\square P_n$ with weight $\omega(g)=\omega(f)-1$, a contradiction.
If $|f((2,j_0+1))|=0$, then the function $g:V(P_3\square P_n)\rightarrow\mathcal{P}(\{1,2,3\})$ defined by
$g((1,j_0+1))=g((2,j_0+1))=\{1\}$ and $g((i,j))=f((i,j))$ otherwise, is a T$3$RDF on $P_3\square P_n$ with weight $\omega(g)=\omega(f)$ and so $g$
is also a $\gamma_{tr3}(P_3\square P_n)$-function, a contradiction to the choice of $f$.

The proof is completed.
\end{proof}

\begin{proposition}\label{p4}
For $n\ge3$,
\begin{displaymath}
\gamma_{tr3}(P_3\square P_n)\\ =\left\{\begin{array}{ll}
\left\lceil\frac{8n}{3}\right\rceil+1,&\textrm{if}\ n\equiv0\pmod3,\\
\left\lceil\frac{8n}{3}\right\rceil,&\textrm{if}\ n\equiv1,2\pmod3.
\end{array}\right.
 \end{displaymath}
\end{proposition}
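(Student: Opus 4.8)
The plan is to prove matching lower and upper bounds on $\gamma_{tr3}(P_3\square P_n)$, in the spirit of Propositions~\ref{p2} and~\ref{p3}. For the lower bound, fix a $\gamma_{tr3}(P_3\square P_n)$-function $f$ whose number of empty vertices is minimum, and put $b_j=|f((1,j))|+|f((2,j))|$ for $j\in\{0,1,\dots,n-1\}$. Since $|f((0,j))|=1$ for every $j$ by Lemma~\ref{l8}, we have $\omega(f)=n+\sum_{j=0}^{n-1}b_j$, so it is enough to bound $\sum_j b_j$ from below. I would first classify each column by the pair $(|f((1,j))|,|f((2,j))|)$: Lemma~\ref{l5} rules out $(1,0)$; the rainbow condition at $(2,j)$ together with Lemma~\ref{l4} rules out $(0,0)$ (it would force $|f((2,j-1))|=3$); and Lemma~\ref{l7} rules out $(2,1)$ when $j\le n-2$. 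Hence every column has one of the shapes $A=(1,1)$, $B=(2,0)$, $D=(0,1)$, except that column $n-1$ may additionally have shape $(2,1)$; in every case $b_j\ge 2$ unless the column has shape $D$, in which case $b_j=1$.

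Next I would pin down the set $Z=\{j:b_j=1\}$ of shape-$D$ columns. If column $c$ has shape $D$ then $f((1,c))=\emptyset$ forces $|f((1,c-1))|=2$ (Lemma~\ref{l6}) and then $|f((2,c-1))|=0$ (Lemma~\ref{l7}), so column $c-1$ has shape $B$; in particular $c\ge 2$, and since column $0$ has shape $A$ by Lemma~\ref{l8} we get $0,1\notin Z$. Moreover the nonempty vertex $(2,c)$ has $(1,c)$ empty and $(2,c-1)$ empty, so total domination forces $|f((2,c+1))|\ge 1$; thus $c\ne n-1$, so $n-1\notin Z$, and in fact $|f((2,c+1))|=1$ by Lemma~\ref{l4}. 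Consequently column $c+1$ has $b_{c+1}\ge 2$ (so $c+1\notin Z$) and is not shape $B$, which by the previous remark forces column $c+2$ not to be shape $D$ (so $c+2\notin Z$). Therefore $Z\subseteq\{2,\dots,n-2\}$ and any two of its elements differ by at least $3$, whence $|Z|\le\lfloor(n-1)/3\rfloor$ and
\[
\omega(f)=n+\sum_{j}b_j\ \ge\ n+2\bigl(n-|Z|\bigr)+|Z|\ =\ 3n-|Z|\ \ge\ 3n-\lfloor(n-1)/3\rfloor ,
\]
and a short calculation shows the right-hand side equals $\lceil 8n/3\rceil+1$ when $3\mid n$ and $\lceil 8n/3\rceil$ when $n\equiv 1,2\pmod 3$.

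For the upper bound I would exhibit the T$3$RDF $g$ given by $g((0,j))=\{1\}$ for all $j$, and $(g((1,j)),g((2,j)))$ equal to $(\{1\},\{1\})$, $(\{2,3\},\emptyset)$ or $(\emptyset,\{1\})$ according as $j\equiv 0,1$ or $2\pmod 3$. One checks directly that the only empty vertices are the $(2,j)$ with $j\equiv 1$ (rainbow-dominated by $(1,j)$ and $(2,j-1)$) and the $(1,j)$ with $j\equiv 2$ (rainbow-dominated by $(0,j)$ and $(1,j-1)$), and that every nonempty vertex has a nonempty in- or out-neighbour; the weight is $\lceil 8n/3\rceil$. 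This works verbatim when $n\equiv 1,2\pmod 3$. When $n\equiv 0\pmod 3$ the last column has shape $D$ and $(2,n-1)$ is isolated under $g$; replacing $g((1,n-1))=\emptyset$ by $\{1\}$ repairs this and increases the weight by exactly $1$, giving $\lceil 8n/3\rceil+1$. Combining the two bounds yields the stated formula.

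The step I expect to be the main obstacle is the lower-bound bookkeeping — in particular verifying that shape-$D$ columns pairwise differ by at least $3$ and avoid positions $0$, $1$ and $n-1$. The position $n-1$ requires separate care since Lemmas~\ref{l7} and~\ref{l9} do not govern the last column, and this boundary asymmetry is exactly what produces the extra $+1$ when $3$ divides $n$; matching that unit precisely in both directions is the delicate part.
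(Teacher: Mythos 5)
Your proof is correct, and your lower bound takes a genuinely different route from the paper's. The paper works with the full column sums $a_j=\sum_{i=0}^2|f((i,j))|$, proves the sliding-window inequality $a_j+a_{j+1}+a_{j+2}\ge 8$ by a three-case analysis on $|f((1,j+1))|$ (using Lemma~\ref{l9} in an essential way), and then, for $n\equiv 0\pmod 3$, must run a separate extremal argument: assuming equality it traces the forced periodic pattern $|f((1,3t+1))|=2$, $|f((1,3t+2))|=|f((2,3t+1))|=0$ all the way to the right boundary and derives a contradiction from the isolation of $(2,n-1)$, which is how the extra $+1$ appears. Your argument instead classifies each column by the pair $(|f((1,j))|,|f((2,j))|)$, isolates the single deficient shape $(0,1)$, and shows directly that deficient columns lie in $\{2,\dots,n-2\}$ and are pairwise at distance at least $3$; the bound $|Z|\le\lfloor(n-1)/3\rfloor$ then yields $\omega(f)\ge 3n-\lfloor(n-1)/3\rfloor$, which already equals $\lceil 8n/3\rceil+1$ when $3\mid n$, so the boundary case needs no separate treatment and Lemma~\ref{l9} is not needed at all. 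The exclusion of shape $(0,0)$ silently assumes $j\ge1$ (for $j=0$ it follows instead from Lemma~\ref{l8}), a cosmetic point only. Your upper-bound construction is the paper's construction up to a renaming of colours, including the same one-unit repair at column $n-1$ when $n\equiv 0\pmod 3$. What your approach buys is a shorter and more transparent lower bound in which the $+1$ falls out of the boundary constraints on $Z$ rather than from an ad hoc equality analysis; what the paper's approach buys is that the window inequality $a_j+a_{j+1}+a_{j+2}\ge8$ is a reusable local statement of independent interest.
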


\begin{proof} Let $f$ be a $\gamma_{tr3}(P_3\square P_n)$-function such that the number of vertices assigned $\emptyset$ under $f$ is minimum
and let $a_j=\sum_{i=0}^2|f((i,j))|$ for each $j\in \{0,1,\dots,n-1\}$.
Let $j\in \{0,1,\dots,n-3\}$. We now claim that $a_j+a_{j+1}+a_{j+2}\ge8$.
Note that $|f((1,j+1))|\in\{0,1,2\}$ by Lemma \ref{l6}.
We now consider three cases.

\vspace*{0.5mm}
\textbf{Case 1.} Suppose that $|f((1,j+1))|=0$.
\vspace*{0.5mm}

If $|f((2,j+1))|=0$, then clearly $f((2,j))=\{1,2,3\}$, a contradiction to Lemma \ref{l4}.
Hence by Lemma \ref{l4}, we have $|f((2,j+1))|=1$.
Since $|f((0,j+1))|=1$ by Lemma \ref{l8} and $|f((1,j+1))|=0$, we obtain $\{1,2,3\}\backslash f((0,j+1))\subseteq f((1,j))$ and so by Lemma \ref{l6}, $|f((1,j))|=2$.
Then it follows from Lemma \ref{l7} that $|f((2,j))|=0$.
Moreover, since the subdigraph of $P_3\square P_n$ induced by the set $\{v\in V(P_3\square P_n):f(v)\ne\emptyset\}$ has no isolated vertex, $|f((2,j+2))|\ge1$.
Recall that $|f((0,j+2))|=1$ by Lemma \ref{l8} and $|f((1,j+1))|=0$. Thus $|f((1,j+2))|\ge1$.
Note that $|f((0,j))|=|f((0,j+1))|=|f((0,j+2))|=1$ by Lemma \ref{l8}.
Therefore, we conclude that $a_j+a_{j+1}+a_{j+2}\ge8$.

\vspace*{0.5mm}
\textbf{Case 2.} Suppose that $|f((1,j+1))|=1$.
\vspace*{0.5mm}

If $|f((1,j))|=|f((2,j))|=0$, then $f((2,j-1))=\{1,2,3\}$, a contradiction to Lemma \ref{l4}.
Thus we have $|f((1,j))|+|f((2,j))|\ge1$.
Since $|f((1,j+1))|=1$, we conclude from Lemma \ref{l5} that $|f((2,j+1))|=1$.
And it follows from Lemma \ref{l9} that $|f((1,j+2))|+|f((2,j+2))|=2$.
Note that $|f((0,j))|=|f((0,j+1))|=|f((0,j+2))|=1$ by Lemma \ref{l8}.
Therefore, we obtain $a_j+a_{j+1}+a_{j+2}\ge8$.

\vspace*{0.5mm}
\textbf{Case 3.} Suppose that $|f((1,j+1))|=2$.
\vspace*{0.5mm}

It follows from Lemma \ref{l7} that $|f((1,j+2))|=|f((2,j+1))|=0$, implying that $|f((2,j))|\ge1$ and $|f((2,j+2))|\ge1$ and so by Lemma \ref{l4}, we obtain $|f((2,j))|=|f((2,j+2))|=1$.
Now suppose that $|f((1,j))|=0$.
Moreover, since $|f((0,j))|=1$ by Lemma \ref{l8}, $\{1,2,3\}\backslash f((0,j))\subseteq f((1,j-1))$ and so by Lemma \ref{l6}, $|f((1,j-1))|=2$.
We conclude from Lemma \ref{l7} that $|f((2,j-1))|=0$.
Note that $|f((2,j-1))|=|f((1,j))|=|f((2,j+1))|=0$ and $|f((2,j))|=1$. This is a contradiction to the fact that the subdigraph of $P_3\square P_n$ induced by the set $\{v\in V(P_3\square P_n):f(v)\ne\emptyset\}$ has no isolated vertex.
Therefore, we have $|f((1,j))|\ge1$.
Recall that $|f((0,j))|=|f((0,j+1))|=|f((0,j+2))|=1$ by Lemma \ref{l8}.
Therefore, we obtain $a_j+a_{j+1}+a_{j+2}\ge8$.

\vspace*{0.5mm}

By the above arguments, we have that for each $j\in \{0,1,\dots,n-3\}$, $a_j+a_{j+1}+a_{j+2}\ge8$.
Since $|f((0,0))|=|f((1,0))|=|f((2,0))|=|f((0,1))|=1$ by Lemma \ref{l8}, we have that $a_0=3$ and it follows from Lemma \ref{l9} that $|f((1,1))|+|f((2,1))|=2$, implying that $a_1=3$.
Let $n=3k+l\ge3$, where $k$ is a positive integer and $l\in\{0,1,2\}$.
If $l=0$, then
\begin{equation}\label{e123456}
\gamma_{tr3}(P_3\square P_n)=\omega(f)=\sum_{i=0}^{k-1}(a_{3i}+a_{3i+1}+a_{3i+2})\ge 8k=\left\lceil\frac{8n}{3}\right\rceil,
\end{equation}
if $l=1$, then
$$\gamma_{tr3}(P_3\square P_n)=\omega(f)=a_0+\sum_{i=0}^{k-1}(a_{3i+1}+a_{3i+2}+a_{3i+3})\ge 8k+3=\left\lceil\frac{8n}{3}\right\rceil,$$
and if $l=2$, then
$$\gamma_{tr3}(P_3\square P_n)=\omega(f)=a_0+a_1+\sum_{i=1}^{k}(a_{3i-1}+a_{3i}+a_{3i+1})\ge 8k+6=\left\lceil\frac{8n}{3}\right\rceil.$$

If fact, if $l=0$, then $\gamma_{tr3}(P_3\square P_n)\ge \left\lceil\frac{8n}{3}\right\rceil+1.$
Suppose, to the contrary, that $\gamma_{tr3}(P_3\square P_n)=\left\lceil\frac{8n}{3}\right\rceil$.
Then we have equality throughout the inequality chain (\ref{e123456}), implying that $a_{3i}+a_{3i+1}+a_{3i+2}=8$ for each $i\in \{0,1,\dots,n/3-1\}$.

Recall that $|f((0,0))|=|f((1,0))|=|f((2,0))|=|f((0,1))|=|f((0,2))|=1$ by Lemma \ref{l8}.
Suppose that $|f((1,1))|=1$.
Moreover, since $|f((2,0))|=1$, we have $|f((2,1))|\ge1$ and so by Lemma \ref{l4}, $|f((2,1))|=1$.
It follows from Lemma \ref{l9} that $|f((1,2))|+|f((2,2))|=2$.
Consequently, we obtain $a_0+a_1+a_2=9$, a contradiction.
Thus $|f((1,1))|\ne1$.
Note that $|f((1,1))|\le2$ by Lemma \ref{l6} and $|f((1,1))|\ge1$ since $|f((0,1))|=|f((1,0))|=1$.
Therefore, we have $|f((1,1))|=2$ an so by Lemma \ref{l7}, $|f((1,2))|=|f((2,1))|=0$.
This implies that $|f((2,2))|\ge1$ and hence by Lemma \ref{l4}, we get $|f((2,2))|=1$.
Since $|f((1,2))|=|f((2,1))|=0$ and $|f((2,2))|=1$, we conclude from the definition of $\gamma_{tr3}(P_3\square P_n)$-function that  $|f((2,3))|\ge1$ and so by Lemma \ref{l4}, $|f((2,3))|=1$.
And it follows from Lemma \ref{l7} that $|f((1,3))|\ne2$.
Moreover, since $|f((0,3))|=1$ by Lemma \ref{l8} and $|f((1,2))|=0$, $|f((1,3))|\ge1$ and so we conclude from Lemma \ref{l6} that $|f((1,3))|=1$.
Repeating this process we can obtain that $|f((1,3t+1))|=2$ and $|f((1,3t+2))|=|f((2,3t+1))|=0$ for $0\le t\le n/3-1$ and $|f((i,j))|=1$ otherwise.
In particular, $|f((1,n-1))|=|f((2,n-2))|=0$ and $|f((2,n-1))|=1$. This is a contradiction to the fact that the subdigraph of $P_3\square P_n$ induced by the set $\{v\in V(P_3\square P_n):f(v)\ne\emptyset\}$ has no isolated vertex.
As a result, if $l=0$, then $\gamma_{tr3}(P_3\square P_n)\ge \left\lceil\frac{8n}{3}\right\rceil+1.$

To show the upper bound, we now provide a T$3$RDF $g:V(P_3\square P_n)\rightarrow\mathcal{P}(\{1,2,3\})$ as follows:
if $n\equiv0\pmod3$, then the function $g$ defined by
\begin{displaymath}
g((i,j))\\ =\left\{\begin{array}{ll}
\{1,2\},&\textrm{if}\ i=1\ \textrm{and}\ j=3t+1\ \textrm{for}\ 0\le t\le n/3-1,\\
\emptyset,&\textrm{if}\ i=1\ \textrm{and}\ j=3t+2\ \textrm{for}\ 0\le t\le n/3-2,\\
          & \textrm{or}\  i=2\ \textrm{and}\ j=3t+1\ \textrm{for}\ 0\le t\le n/3-1,\\
\{3\},&\textrm{otherwise},
\end{array}\right.
 \end{displaymath}
and so
$$\gamma_{tr3}(P_3\square P_n)\le\omega(g)=2\times\frac{n}{3}+\bigg(3n-\frac{n}{3}-\frac{2n}{3}+1\bigg)=\left\lceil\frac{8n}{3}\right\rceil+1,$$
and if $n\equiv1,2\pmod3$, then the function $g$ defined by
\begin{displaymath}
g((i,j))\\ =\left\{\begin{array}{ll}
\{1,2\},&\textrm{if}\ i=1\ \textrm{and}\ j=3t+1\ \textrm{for}\ 0\le t\le \lceil(n-1)/3\rceil-1,\\
\emptyset,&\textrm{if}\ i=1\ \textrm{and}\ j=3t+2\ \textrm{for}\ 0\le t\le \lceil(n-2)/3\rceil-1,\\
          & \textrm{or}\  i=2\ \textrm{and}\ j=3t+1\ \textrm{for}\ 0\le t\le \lceil(n-1)/3\rceil-1,\\
\{3\},&\textrm{otherwise},
\end{array}\right.
 \end{displaymath}
and so
\begin{eqnarray}
\gamma_{tr3}(P_3\square P_n)&\le&\omega(g)  \nonumber\\
&=&2\times\left\lceil\frac{n-1}{3}\right\rceil+\bigg(3n-\left\lceil\frac{n-1}{3}\right\rceil-\left\lceil\frac{n-2}{3}\right\rceil-\left\lceil\frac{n-1}{3}\right\rceil\bigg) \nonumber\\
&=&3n-\left\lceil\frac{n-2}{3}\right\rceil  \nonumber\\
&=&\left\lceil\frac{8n}{3}\right\rceil,  \nonumber
\end{eqnarray}
which completes our proof.
\end{proof}

\section*{\large Acknowledgements}

This study was supported by Research Foundation of Education Bureau
of Jiangxi Province of China (No. GJJ180374).

\end{document}